\documentclass[reqno, 11pt]{amsart}
 \usepackage{amsmath}
 \usepackage{amssymb}
 \usepackage{amsthm}
 \usepackage{times}
 \usepackage{latexsym}
 \usepackage[mathscr]{eucal}
 \usepackage{tikz-cd}

 \numberwithin{equation}{section}
 
 \newtheorem{theorem}{Theorem}[section]
  
 \newtheorem{proposition}[theorem]{Proposition}
 
 \newtheorem{corollary}[theorem]{Corollary}
 \newtheorem{remark}[theorem]{Remark}

 \newtheorem{definition}[theorem]{Definition}
 
 \newtheorem{nota}[theorem]{Notation}
 \def\slash#1{\setbox0=\hbox{$#1$}#1\hskip-\wd0\hbox to\wd0{\hss\sl/\/\hss}}

 \title[Ricci Curvature and Betti Numbers of Hessian Manifolds]{Ricci Curvature and Betti Numbers of Hessian Manifolds}
 
 \author[Emmanuel Gnandi,St\'ephane Puechmorel ]{Emmanuel Gnandi*,  St\'ephane Puechmorel**}

 \newcommand{\acr}{\newline\indent}
 
 \address{\llap{*\,}INSA de Toulouse\acr
 	Département de G\'enie Mathématique\acr
 	Université de Toulouse\acr
 	135 avenue de Rangueil\acr
 	31077 Toulouse cedex 4\acr
 	 France} 
 \email{kpanteemmanuel@gmail.com, gnandi@insa-toulouse.fr}
 \thanks{}
 
 \address{\llap{**\,}ENAC\acr 
 	Laboratoire ENAC\acr
 	Université de Toulouse\acr
 	7 Avenue Edouard Belin,\acr 
 	Toulouse, 31055}  
 \email{stephane.puechmorel@enac.fr}  
 \thanks{}  
 
 \subjclass[2020]{Primary 53A15, 53C15, 53B05; 
                 Secondary 53C55, 53D35, 57R17, 53C50, 53C25}
 \keywords{Hessian manifolds, hyperbolic manifold, First Koszul form.}
 
\begin{document}
 
 \begin{abstract}   
We study Ricci curvature properties of Hessian metrics on the leaves of the codimension-one foliation $\mathcal{F}_\omega = \ker\,\omega$ generated by the first Koszul form $\omega$ of a closed oriented Hessian manifold. Our main 
result reveals a striking rigidity phenomenon: non-negative Ricci curvature on a 
single leaf of $\mathcal{F}_\omega$ compels the Hessian metric to be flat, 
yields sharp bounds on the first Betti number in terms of the dimension of the Hessian manifold and the 
topology of the leaves. This rigidity also shows that Koszul-type and radiant affine manifolds admit no leaf carrying non-negative Ricci curvature, reflecting a fundamental incompatibility between affine hyperbolicity and leafwise curvature positivity. In dimension three, we obtain a complete classification of the underlying manifold, extended to the non-orientable setting via the orientation double cover.
 \end{abstract} 
 	% Keywords 

 	\maketitle

 \section{Introduction}
 	
Hessian geometry sits at the confluence of several deep mathematical theories, bridging
differential geometry, information geometry, K\"{a}hlerian geometry, and mathematical physics.
A Riemannian metric on a locally flat manifold is called a Hessian metric if it can
be locally expressed as the Hessian of a smooth function with respect to affine coordinate
systems. A pair consisting of a flat structure and a Hessian metric is called a
Hessian structure, and a manifold equipped with such a structure is referred to as a
Hessian manifold. Canonical examples include regular convex cones and the space of
all positive definite real symmetric matrices.\\
A Hessian manifold $(M, g, \nabla)$ is simultaneously a statistical manifold in the sense of Amari~\cite{amari2000}, a locally flat manifold endowed with a compatible Riemannian
structure, and a natural framework for convex duality. This threefold nature makes Hessian
geometry one of the most fertile settings in modern geometric analysis.

A central structural result, due to Shima~\cite{shima2007}, asserts that the tangent bundle
$TM$ of any Hessian manifold carries a canonical K\"{a}hlerian metric, constructed from the
Hessian structure via the Dombrowski construction~\cite{Dombrowski1962}. This deep connection
with K\"{a}hlerian geometry reveals that Hessian manifolds are, in a precise sense, the real
analogues of K\"{a}hler manifolds: just as a K\"{a}hler metric is locally the complex Hessian
of a K\"{a}hler potential, a Hessian metric is locally the real Hessian of a convex potential
function~\cite{shima2007}. For this reason, S.\,Y.\ Cheng and S.\,T.\ Yau referred to
Hessian metrics as affine K\"{a}hler metrics. This connection is further formalized
through the notion of special K\"{a}hlerian geometry, in which Hessian structures arise
naturally~\cite{freed1999}. In~\cite{gnandi2025}, the author further confirms this deep
connection by proving that any compact orientable three-dimensional Hessian manifold is either
the Hantzsche-Wendt manifold or a K\"{a}hler mapping torus.\\
In information geometry, Hessian manifolds play a fundamental role. Statistical manifolds
arising from exponential families provide canonical examples: the Fisher information metric
coincides with the Hessian of the log-partition function, and the Amari--Chentsov dual affine
connections $\nabla^{(e)}$(exponential connection (e-connection)) and $\nabla^{(m)}$(the mixture connection (m-connection)) correspond precisely to the flat connections
$\nabla$ and $\nabla^{*}$ of the underlying Hessian structure~\cite{amari2000}.\\
Beyond these foundational connections, Hessian manifolds appear naturally in a remarkably
diverse range of mathematical and applied contexts. In algebraic geometry and symplectic geometry, they arise in the
study of Delzant polytopes, toric symplectic manifolds, and toric K\"{a}hler
manifolds~\cite{Fujita2024}, real Monge–Ampère equations~\cite{PuechmorelTo2023}, as well as in mirror
symmetry~\cite{zhang2020}. In mathematical physics, Souriau's geometric formulation of
thermodynamics on Lie groups~\cite{souriau1997} relies essentially on Hessian structures, a
perspective further developed by Barbaresco~\cite{barbaresco2015} in the framework of
Koszul--Vinberg geometry.\\
Armstrong and Amari~\cite{amari2014curvature} established that the Pontryagin forms of any
Hessian metric vanish identically. Moreover, compact manifolds with finite fundamental group
cannot support Hessian structures\cite{ay2002dually}, showing that the existence of such a
structure imposes strong topological constraints on the underlying manifold. In this direction,
Shima~\cite{shima1981hessian} proved that the universal affine covering of any compact
Hessian manifold is necessarily a convex domain, from which he derived the nonexistence of
Hessian metrics on Hopf manifolds $(\mathbb{S}^{m-1} \times \mathbb{S}^{1})$ for $m > 1$.\\

The aim of this paper is to study the Ricci curvature of Hessian metrics on the leaves of the
foliation $\mathcal{F}_\omega = \ker\,\omega$ defined by the first Koszul form of a closed
Hessian manifold $(M, g, \nabla)$. We show that non-negative Ricci curvature on a single leaf
forces the metric to be flat, yields sharp bounds on the first Betti number, and rules out
such leaves on radiant and hyperbolic affine manifolds. In dimension three, we derive a
complete classification of the underlying manifold.\\

The paper is organized as follows. Section~\ref{sec:pre} reviews the necessary background on
Hessian manifolds, affine hyperbolicity in the sense of Koszul, and $KV$-coho\-mology, a tool that gives a characterization of Hessian manifolds of Koszul type.
Section~\ref{sec:main} is devoted to the statement and proof of the main theorem, together
with the corollaries that can be derived from it.

\section{Preliminaries}\label{sec:pre}
%------------------------------------------------------------------
The first part of this section is devoted to Hessian manifolds and hyperbolic gauge structure sometimes called hyperbolic manifold in the sense of Koszul, not to be confused with an hyperbolic manifold. The second part introduced a cohomological tool that can be used to prove that a metric is Hessian. Since both are not so common, and for the sake of completeness, we recall the basic definitions and properties.
\subsection{Hessian manifolds}
\begin{definition}
\label{def:gauge_structure}
A gauge structure is a couple $\left( M, \nabla\right)$ where $M$ is a smooth manifold and $\nabla$ a Koszul connection on $TM.$
\end{definition}

\begin{definition}
A locally flat manifold (also called an affinely flat manifold) is a 
gauge structure $(M, \nabla)$, where $\nabla$ is a torsion-free connection whose curvature 
tensor vanishes identically:
\[
  T^\nabla = 0, \qquad R^\nabla = 0.
\]
The vanishing of $R^\nabla$ ensures that around every point $p \in M$ there exist 
local coordinates $(x^1, \dots, x^n)$, called affine coordinates, in which 
the Christoffel symbols of $\nabla$ vanish identically. Such a connection $\nabla$ 
is called a flat connection.
\end{definition}

\begin{remark}
By a classical theorem of differential geometry, a manifold $M$ admits a locally 
flat connection if and only if it admits an affinely flat structure, i.e., an atlas 
whose transition maps are affine transformations of $\mathbb{R}^n$. In particular, 
every locally flat manifold is locally diffeomorphic to an open subset of 
$\mathbb{R}^n$ equipped with its standard flat connection.
\end{remark}

The interplay between the flat connection $\nabla$ and the Riemannian metric $g$ 
is encoded by the difference tensor $s = D - \nabla$, where $D$ denotes 
the Levi-Civita connection of $(M, g)$. Since both $D$ and $\nabla$ are 
torsion-free, $s$ is a symmetric $(1,2)$-tensor:
\[
  s_X Y = s_Y X \qquad \text{for all } X, Y \in \mathfrak{X}(M).
\]
In affine coordinates $(x^i)$, the components $s^i_{jk}$ of $s$ coincide with the 
Christoffel symbols $\Gamma^i_{jk}$ of $D$, since $\nabla$ has vanishing 
Christoffel symbols in these coordinates. The tensor $s$ thus measures the 
deviation of $g$ from being compatible with $\nabla$, and plays a central role in 
the structure theory of Hessian manifolds.

\begin{definition}[\cite{shima1997, shima2007}]
A Riemannian metric $g$ on a locally flat manifold $(M, \nabla)$ is called a 
Hessian metric if every point of $M$ admits a neighbourhood $U$ and a 
smooth strictly convex function $\phi \in C^\infty(U)$, called a 
Hessian potential, such that
\[
  g = \nabla^2 \phi, \qquad \text{i.e.,} \qquad
  g_{ij} = \frac{\partial^2 \phi}{\partial x^i \partial x^j},
\]
where $(x^1, \dots, x^n)$ is an affine coordinate system with respect to $\nabla$.
This is the real analogue of the K\"ahler condition $g = i\partial\bar\partial\psi$.
A locally flat manifold $(M, \nabla)$ endowed with a Hessian metric $g$ is called a 
Hessian manifold, denoted $(M, g, \nabla)$.
\end{definition}

The following proposition, due to Shima, provides several equivalent 
characterizations of Hessian metrics. Condition~(2) is a Codazzi-type 
equation relating $\nabla$ and $g$, conditions~(3) and~(5) are its coordinate 
expressions, and condition~(4) asserts the self-adjointness of $s_X$ with respect 
to $g$.

\begin{proposition}[\cite{shima1997}]
Let $(M, \nabla)$ be a locally flat manifold and $g$ a Riemannian metric on $M$.
The following conditions are equivalent:
\begin{enumerate}
  \item $g$ is a Hessian metric;
  \item $(\nabla_X g)(Y,Z) = (\nabla_Y g)(X,Z)$ for all 
        $X, Y, Z \in \mathfrak{X}(M)$;
  \item In affine coordinates $(x^i)$, the components $g_{ij}$ satisfy
        \[
          \frac{\partial g_{ij}}{\partial x^k} = \frac{\partial g_{kj}}{\partial x^i};
        \]
  \item The tensor $s_X$ is self-adjoint with respect to $g$ for every 
        $X \in \mathfrak{X}(M)$:
        \[
          g(s_X Y, Z) = g(Y, s_X Z) \qquad 
          \text{for all } Y, Z \in \mathfrak{X}(M);
        \]
  \item $s_{ijk} = s_{jik}$, where $s_{ijk} = g_{il}\, s^l_{jk}$.
\end{enumerate}
\end{proposition}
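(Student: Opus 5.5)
The plan is to prove the equivalences in a cycle, $(1)\Rightarrow(3)\Leftrightarrow(2)\Leftrightarrow(4)\Leftrightarrow(5)$, and then close with $(3)\Rightarrow(1)$. Everything is local, so I work in an affine chart $(x^i)$ for $\nabla$. There $\nabla_{\partial_i}\partial_j=0$, so $(\nabla_k g)_{ij}=\partial_k g_{ij}$. This makes $(2)\Leftrightarrow(3)$ immediate once both sides of (2) are evaluated on coordinate fields, using tensoriality. For $(1)\Rightarrow(3)$: if $g_{ij}=\partial_i\partial_j\phi$, then $\partial_k g_{ij}=\partial_i\partial_j\partial_k\phi$, which is totally symmetric, and in particular symmetric in $i,k$.

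For $(3)\Rightarrow(1)$, I use the Poincaré lemma twice on a contractible (say convex) affine neighbourhood $U$. For fixed $j$, condition (3) says the one-form $\alpha_j=g_{ij}\,dx^i$ is closed. Hence $g_{ij}=\partial_i\theta_j$ for some functions $\theta_j$. By the symmetry of $g$, $\partial_i\theta_j=\partial_j\theta_i$, so $\theta=\theta_j\,dx^j$ is closed and $\theta_j=\partial_j\phi$. Therefore $g_{ij}=\partial_i\partial_j\phi$. Positive definiteness of $g$ then gives strict convexity of $\phi$ in the affine coordinates.

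For the link with $s$, note that in affine coordinates the components $s^i_{jk}$ are the Christoffel symbols $\Gamma^i_{jk}$ of $D$. Lowering an index gives the first-kind symbols
\[
s_{ijk}=g_{il}s^l_{jk}=\tfrac12\left(\partial_j g_{ik}+\partial_k g_{ij}-\partial_i g_{jk}\right).
\]
Swapping $i$ and $j$,
\[
s_{ijk}-s_{jik}=\partial_k g_{ij}\cdot 0+\left(\partial_j g_{ik}-\partial_i g_{jk}\right),
\]
so (5) is equivalent to $\partial_j g_{ik}=\partial_i g_{jk}$, which is (3). For $(4)\Leftrightarrow(5)$, write
\[
g(s_XY,Z)=s_{ljk}Z^lX^jY^k, \qquad g(Y,s_XZ)=s_{kjl}Y^kX^jZ^l.
\]
Self-adjointness is therefore $s_{ljk}=s_{kjl}$, i.e.\ symmetry of $s_{\cdot j\cdot}$ in its first and third slots. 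Combined with the built-in symmetry $s_{ijk}=s_{ikj}$ from torsion-freeness, this symmetry is equivalent to $s_{ijk}=s_{jik}$, since together they generate all permutations. So (4)$\Leftrightarrow$(5).

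The only step needing care is the index bookkeeping in $(4)\Leftrightarrow(5)$ and in the Christoffel formula: the convention for which slot of $s_{ijk}$ carries the lowered index, and which index of $s^l_{jk}$ corresponds to $X$, must match the statement. I would fix the convention $s_{\partial_j}\partial_k=s^l_{jk}\partial_l$ at the outset. The analytic content, passing from (3) to (1), is only the double Poincaré lemma. One also notes that the potential is only local, which is all the definition of a Hessian metric requires.
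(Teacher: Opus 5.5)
The paper gives no proof of this proposition; it is quoted from Shima \cite{shima1997} as background, so there is nothing in the text to compare your argument with. Your proof is correct and complete.

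\begin{itemize}
\item (2)$\Leftrightarrow$(3): in an affine chart $(\nabla_{\partial_k}g)_{ij}=\partial_k g_{ij}$, and (2) is tensorial. This also shows that (3) holds in every affine chart once it holds in the chart carrying the potential.
\item (3)$\Rightarrow$(1): the double Poincar\'e lemma on a convex chart produces the potential.
\item (5)$\Leftrightarrow$(3): the first-kind Christoffel formula gives $s_{ijk}-s_{jik}=\partial_j g_{ik}-\partial_i g_{jk}$. The $\partial_k g_{ij}$ terms simply cancel by symmetry of $g$; writing ``$\partial_k g_{ij}\cdot 0$'' is an odd way to say so.
\item (4)$\Leftrightarrow$(5): your slot-permutation argument is right.
\end{itemize}

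The only slip is cosmetic. Your announced cycle lists (2)$\Leftrightarrow$(4), but the body actually proves (3)$\Leftrightarrow$(5) and (4)$\Leftrightarrow$(5). The implications you do prove still link all five conditions.

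For comparison, there is a shorter coordinate-free link between (2) and (4). Since $Dg=0$, one has $(\nabla_X g)(Y,Z)=g(s_XY,Z)+g(Y,s_XZ)$. Using $s_XY=s_YX$, this gives
\[
(\nabla_X g)(Y,Z)-(\nabla_Y g)(X,Z)=g(Y,s_ZX)-g(s_ZY,X).
\]
So the Codazzi condition (2) is literally the $g$-self-adjointness of every $s_Z$, with no Christoffel formula or index convention involved. Your coordinate route costs the bookkeeping you flagged. In exchange, it makes immediate the explicit identity $s_{ijk}=\tfrac12\partial_k g_{ij}=\tfrac12\partial_i\partial_j\partial_k\phi$ for Hessian metrics. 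That identity is what the coordinate expression for $R^g_{jk}$ recalled in the paper rests on.
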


The Hessian structure $(\nabla, g)$ on an orientable manifold gives rise to a 
canonical closed $1$-form, introduced by Koszul in the context of convex 
homogeneous domains~\cite{koszul1961}. Koszul observed that the flat 
connection $\nabla$ acts on the volume form $\mu_g$ of $g$ by a multiplicative 
scalar, thereby defining a $1$-form. Following Shima~\cite{shima2007}, 
this form is called the first Koszul form of $(\nabla, g)$.

\begin{definition}[\cite{koszul1961, shima1997, PuechmorelTo2023}]
Let $(M, g, \nabla)$ be an orientable Hessian manifold, and let $\mu_g$ denote 
the Riemannian volume form of $g$. The first Koszul form $\omega$ of the 
Hessian structure $(\nabla, g)$ is the $1$-form defined by
\[
  \nabla_X \mu_g = \omega(X)\, \mu_g,
  \qquad X \in \mathfrak{X}(M).
\]
\end{definition}

A direct computation in affine coordinates yields the explicit expressions
\[
  \omega(X) = \operatorname{tr}(s_X), \qquad
  \omega_i
  = \frac{1}{2}\,\frac{\partial \log \det(g_{pq})}{\partial x^i}
  = s^k_{ki}.
\]
In particular, $\omega$ is a closed $1$-form~\cite{shima2007}, and its 
cohomology class $[\omega] \in H^1(M, \mathbb{R})$ is a global invariant of the 
Hessian structure. When $(M, g, \nabla)$ arises from a convex cone via the 
Koszul-Vinberg construction, $\omega$ coincides with the logarithmic derivative 
of the characteristic function of the cone.

By a theorem of Shima and Yagi~\cite[Theorem~4.1]{shima1997}, on a 
compact orientable Hessian manifold the first Koszul form $\omega$ satisfies
\[
  D\omega = 0,
\]
so that its pointwise norm $\|\omega\|_g$ is constant on $M$. Consequently, 
exactly one of the following two cases occurs: either $\omega \equiv 0$, or 
$\omega$ is everywhere non-vanishing. An example of a non-compact Hessian manifold 
satisfying $D\omega \ne 0$ \cite[p.~282]{shima1997}.

In the case $\omega \equiv 0$, by~\cite[Theorem~4.2]{shima1997} the 
Levi-Civita and flat connections coincide, $D = \nabla$, so $(M, g)$ is a flat 
Riemannian manifold. By the Bieberbach 
theorems~\cite{bieberbach1911, wolf2023}, every compact 
flat Riemannian manifold is finitely covered by a flat torus. 
In particular, the first Betti number satisfies
\[
  0 \leq b_1(M) \leq dim(M).
\]
Moreover, by a result of Shima~\cite[Corollary~6]{shima1980}, every 
compact connected homogeneous Hessian manifold of dimension $d$ is isometric to 
a Euclidean torus, in which case $b_1(M) = d$.

The Ricci tensor $\mathrm{Ric}^g$ of the Levi-Civita connection $D$ is expressed 
in affine coordinates $(x^i)$ by
\[
  R^g_{jk} = s^{r}_{sj}\, s^{s}_{rk} - s^{r}_{rs}\, s^{s}_{jk},
\]
where $s^i_{jk} = \Gamma^i_{jk}$ are the Christoffel symbols of $D$ in affine 
coordinates. Curvature properties of Hessian metrics have been studied 
in~\cite{shima2007, totaro2004, yilmaz2008}.
\subsection{Hyperbolic connections}
\begin{nota}
If $\gamma \colon [0,1] \to M$ is a path, the parallel transport $T_{\gamma(0)}M \to T_{\gamma(t)}M, t \in [0,1]$ along $\gamma$ 
with respect to an affine connection $\nabla$
will be denoted by $\Pi^\nabla_t(\gamma),$ or simply  $\Pi_t(\gamma)$ 
if there is no risk of confusion.
\end{nota}
\begin{definition}
\label{def:developing_map}
Let $\left(M, \nabla \right)$ be a gauge structure on a connected manifold $M$ and let $x_0 \in M.$ Let
$\gamma \colon [0,1] \to M$ be a path such that $x_0 = \gamma(0)$ Its development in $T_{x_0}M$ is the path:
\begin{equation*}
    \tau(\gamma) \colon t \in [0,1] \mapsto \int_0^t \Pi^{-1}_u(\gamma) \gamma^\prime(u) du.
\end{equation*}
\end{definition}
\begin{proposition}
  \label{prop:affine_dev}
Let $\gamma_1, \gamma_0 \colon [0,1]$ be paths such that $\gamma_1(0)=\gamma_0(1)$ and let $\gamma_1 \cdot \gamma_0$ be their composition. Then:
\begin{equation*}
    \begin{split}
        &\Pi_1(\gamma_1 \cdot \gamma_0) = \Pi_1(\gamma_1)\Pi_1(\gamma_0) \\
        &\tau(\gamma_1 \cdot \gamma_0)(1) = \Pi_1(\gamma_1)^{-1}\tau(\gamma_0)(1) + \tau(\gamma_1)(1). \\
    \end{split}
\end{equation*}
\end{proposition}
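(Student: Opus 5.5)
The plan is a direct computation from Definition~\ref{def:developing_map}, after fixing the parametrization of the composite. Write $\gamma = \gamma_1\cdot\gamma_0$ with $\gamma(t) = \gamma_0(2t)$ for $t\in[0,\tfrac12]$ and $\gamma(t)=\gamma_1(2t-1)$ for $t\in[\tfrac12,1]$. By uniqueness of solutions of the parallel transport ODE, the operator $\Pi_t(\gamma)$ equals $\Pi_{2t}(\gamma_0)$ on the first half. On the second half it is obtained by first transporting along all of $\gamma_0$ and then along the initial piece of $\gamma_1$, i.e. $\Pi_t(\gamma)=\Pi_{2t-1}(\gamma_1)\,\Pi_1(\gamma_0)$. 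Setting $t=1$ gives the first identity, $\Pi_1(\gamma_1\cdot\gamma_0)=\Pi_1(\gamma_1)\Pi_1(\gamma_0)$.

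For the development, I would split $\int_0^1 \Pi_u^{-1}(\gamma)\gamma'(u)\,du$ at $u=\tfrac12$ and apply the substitutions $v=2u$ and $v=2u-1$. The factor $2$ in $\gamma'$ cancels the Jacobian $\tfrac12$, so each half becomes an integral over $[0,1]$. The first half is exactly $\tau(\gamma_0)(1)$. In the second half, inverting the product of the first step gives $\Pi_u^{-1}(\gamma)=\Pi_1(\gamma_0)^{-1}\Pi_{v}(\gamma_1)^{-1}$. The constant operator comes out of the integral, and this half is a fixed linear image of $\tau(\gamma_1)(1)$. So the development of the composite is always an affine combination of $\tau(\gamma_0)(1)$ and $\tau(\gamma_1)(1)$, with exactly one of them twisted by a parallel transport operator.

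\textbf{Main obstacle: the convention.} Read literally, with $\gamma_0$ traversed first and the development based at $\gamma_0(0)$, the split produces the twist $\Pi_1(\gamma_0)^{-1}$ on $\tau(\gamma_1)(1)$. This does not match the printed formula. To obtain exactly
\[
\tau(\gamma_1\cdot\gamma_0)(1)=\Pi_1(\gamma_1)^{-1}\tau(\gamma_0)(1)+\tau(\gamma_1)(1),
\]
the operator $\Pi_1(\gamma_1)^{-1}$ must act on the $\gamma_0$-contribution while the $\gamma_1$-contribution is untouched. That requires the development to be normalized at the junction point $\gamma_1(0)=\gamma_0(1)$ rather than at $\gamma_0(0)$.

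I would first try the variant $\tilde\tau(\gamma)=\Pi_1(\gamma)\,\tau(\gamma)(1)$, which expresses developments in the tangent space at the endpoint. The computation above then gives the clean cocycle rule
\[
\tilde\tau(\gamma_1\cdot\gamma_0)=\Pi_1(\gamma_1)\,\tilde\tau(\gamma_0)+\tilde\tau(\gamma_1).
\]
This has the same shape as the printed formula, but with $\Pi_1(\gamma_1)$ in place of its inverse. So $\tilde\tau$ alone does not deliver the stated formula, and I would try next the combination of $\tilde\tau$ with reversed paths.

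For the reversal step, the key facts are that $\bar\gamma(t)=\gamma(1-t)$ satisfies $\Pi_1(\bar\gamma)=\Pi_1(\gamma)^{-1}$ and $\tilde\tau(\bar\gamma)=-\tau(\gamma)(1)$. Both follow from the substitution $u\mapsto 1-u$ and the identity $\Pi_{1-u}(\bar\gamma)=\Pi_{u}(\gamma)\Pi_1(\gamma)^{-1}$. Applying the cocycle rule to reversed paths converts the factor $\Pi_1(\gamma_1)$ into the inverse appearing in the statement.

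I would close the argument by checking, at the level of the two explicit halves of the integral, which assignment of base point and orientation yields precisely the printed right-hand side, and by stating that convention explicitly next to Definition~\ref{def:developing_map}. Everything else in the argument is routine bookkeeping of the substitutions and of the composition rule for $\Pi$.
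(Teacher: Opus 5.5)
Your first two paragraphs are the argument the paper intends: uniqueness for the transport equation $\nabla_{\gamma'}\Pi_t v=0$ gives the first line, and splitting the defining integral gives the second. Your outcome
\[
\tau_1(\gamma_1\cdot\gamma_0)=\tau_1(\gamma_0)+\Pi_1(\gamma_0)^{-1}\tau_1(\gamma_1)
\]
is correct. The mismatch you found is an error in the statement, not a question of convention. The printed second line is the correct formula for concatenation in the opposite order ($\gamma_1$ traversed first, with $\gamma_1(1)=\gamma_0(0)$). That order is incompatible with the stated hypothesis and with the first line. The paper's proof only says to split the integral, and carrying that out yields your formula, not the printed one.

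The genuine gap is your attempted rescue. The reversal step does not convert $\Pi_1(\gamma_1)$ into the inverse you need. Since $\overline{\gamma_1\cdot\gamma_0}=\bar\gamma_0\cdot\bar\gamma_1$, $\Pi_1(\bar\gamma_0)=\Pi_1(\gamma_0)^{-1}$ and $\tilde\tau(\bar\gamma)=-\tau_1(\gamma)$, your $\tilde\tau$ rule applied to the reversed composite reads
\[
-\tau_1(\gamma_1\cdot\gamma_0)=\Pi_1(\gamma_0)^{-1}\bigl(-\tau_1(\gamma_1)\bigr)-\tau_1(\gamma_0).
\]
This is just your literal formula again. Likewise, normalizing at the junction puts $\Pi_1(\gamma_0)$, not $\Pi_1(\gamma_1)^{-1}$, on the $\gamma_0$-contribution.

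Your closing step of ``checking which convention works'' cannot succeed, for two reasons:
\begin{enumerate}
\item The printed right-hand side does not even type-check unless $\gamma_0(0)=\gamma_1(1)$. Indeed $\Pi_1(\gamma_1)^{-1}$ is defined on $T_{\gamma_1(1)}M$, while $\tau_1(\gamma_0)\in T_{\gamma_0(0)}M$.
\item For loops at $x_0$, write $A_i=\Pi_1(\gamma_i)$ and $b_i=\tau_1(\gamma_i)$. The two printed rules give
\[
b\bigl((\gamma_2\gamma_1)\gamma_0\bigr)-b\bigl(\gamma_2(\gamma_1\gamma_0)\bigr)=\bigl((A_2A_1)^{-1}-(A_1A_2)^{-1}\bigr)b_0 ,
\]
which is nonzero in general once the linear holonomy is non-abelian. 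Both $\Pi_1$ and $\tau_1$ are reparametrization invariant, and concatenation is associative up to reparametrization. Hence no choice of base point or orientation makes both printed identities hold.
\end{enumerate}

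The right conclusion is to stop after your computation, record the corrected identity, and flag the misprint. An equivalent form is your rule $\tilde\tau(\gamma_1\cdot\gamma_0)=\Pi_1(\gamma_1)\tilde\tau(\gamma_0)+\tilde\tau(\gamma_1)$. That is the form which makes $\gamma\mapsto(\Pi_1(\gamma),\tilde\tau(\gamma))$ an affine representation.
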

\begin{proof}
The first property is a direct consequence of the observation that, for all $v \in T_{\gamma(0)}M,$ $\Pi_t(\gamma)v$ is a solution to the first order differential equation $\nabla_{\gamma^\prime}\Pi_t v = 0.$
The second one is readily obtained by splitting the integral defining the development into a part
\end{proof}
In the sequel $\tau_1(\gamma)$ will stand for $\tau(\gamma)(1)$ and will be called the development of 
$\gamma$ at $x_0.$
When the gauge structure $(M,\nabla)$ is flat, i.e. $\nabla$ has vanishing curvature,
 the development depends only on the homotopy class of the path, 
 hence is well-defined as a mapping on $\tilde{M},$ where $\tilde{M}$ is a universal covering of $M$.
 The next proposition is a direct consequence of proposition \ref{prop:affine_dev}.
 \begin{proposition}
  \label{prop:affine_action}
 Let $(M, \nabla)$ be a locally flat gauge structure and $x_0 \in M$. Let $\pi_{x_0}$ be the fundamental group
 at $x_0$. Then the mapping $\gamma \in \pi_{x_0} \mapsto \left( \Phi_1(\gamma), \tau_1(\gamma) \right)$ 
 is an affine representation of $\pi_{x_0}$ in $T_{x_0}M.$
 \end{proposition}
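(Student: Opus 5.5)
The plan is to verify the two defining properties of an affine representation: the map is a group homomorphism from $\pi_{x_0}$ into the affine group $\mathrm{Aff}(T_{x_0}M) = GL(T_{x_0}M) \ltimes T_{x_0}M$, and it is well defined on homotopy classes. Here $\Phi_1(\gamma)$ denotes the linear holonomy, i.e.\ the parallel transport $\Pi_1(\gamma)$ (or its inverse, depending on the convention chosen). I first fix a convention for composing affine maps, $(A,a)\circ(B,b) = (AB, Ab + a)$. I then normalise the linear part so that the formula of Proposition~\ref{prop:affine_dev} takes exactly this form. Concretely, I set $\Phi_1(\gamma) := \Pi_1(\gamma)^{-1}$, which is an endomorphism of $T_{x_0}M$ since $\gamma$ is a loop.

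\textbf{Well-definedness.} Since $\nabla$ is flat, parallel transport along a loop depends only on its homotopy class rel endpoints. This holds because the holonomy of a flat connection is trivial on null-homotopic loops, which in turn follows from $R^\nabla=0$ via the standard variation argument.

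For the development, I use the remark preceding the proposition: it depends only on the homotopy class. To justify this, I would lift to the universal cover $\tilde M$ and work in $\nabla$-affine coordinates. There, parallel transport is the identity in coordinates, so $\tau_1(\gamma)$ equals the difference of the developing map $\mathrm{dev}\colon \tilde M \to T_{x_0}M$ evaluated at the endpoints of the lift. The developing map is obtained by analytic continuation of affine charts, which is consistent on the simply connected $\tilde M$. Torsion-freeness is used implicitly here, since it is what makes the local coordinates affine and $\mathrm{dev}$ well defined.

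\textbf{Homomorphism.} For loops $\gamma_0,\gamma_1$ at $x_0$, Proposition~\ref{prop:affine_dev} gives
\[
\Pi_1(\gamma_1\cdot\gamma_0)^{-1} = \Pi_1(\gamma_0)^{-1}\Pi_1(\gamma_1)^{-1}, \qquad
\tau_1(\gamma_1\cdot\gamma_0) = \Pi_1(\gamma_1)^{-1}\tau_1(\gamma_0) + \tau_1(\gamma_1).
\]
The translation part composes as $\Phi(\gamma_1)\tau(\gamma_0)+\tau(\gamma_1)$, which is the translation part of $(\Phi(\gamma_1),\tau(\gamma_1))\circ(\Phi(\gamma_0),\tau(\gamma_0))$. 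The linear part, however, comes out in the reversed order. So with the path-composition convention $\gamma_1\cdot\gamma_0$ ("first $\gamma_0$, then $\gamma_1$"), the assignment is a homomorphism provided the group law on $\pi_{x_0}$ is taken compatibly.

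I expect this convention bookkeeping to be the main obstacle. The inverse in the linear part and the order of concatenation must be matched so that both components multiply in the same order. If they do not match with $\Pi^{-1}$, I would instead use $\Phi_1(\gamma)=\Pi_1(\gamma)$ and recheck the translation formula; equivalently, I would pass to the opposite group, which is harmless since inversion is an anti-isomorphism. Finally, I check that the constant loop maps to $(\mathrm{Id},0)$. Inverses are then automatically respected, so the image is a subgroup of $\mathrm{Aff}(T_{x_0}M)$ and the map is an affine representation.
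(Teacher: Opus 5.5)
Your route matches the paper's, which simply calls the statement a direct consequence of Proposition~\ref{prop:affine_dev} plus homotopy invariance. Your well-definedness argument through the developing map on $\tilde M$, using both $R^\nabla=0$ and $T^\nabla=0$, is fine.

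The gap is in the homomorphism step. Write $A(\gamma)=(\Phi_1(\gamma),\tau_1(\gamma))$ with $\Phi_1=\Pi_1^{-1}$. You correctly notice that with $\tau_1(\gamma_1\cdot\gamma_0)=\Pi_1(\gamma_1)^{-1}\tau_1(\gamma_0)+\tau_1(\gamma_1)$ the translation part behaves like $A(\gamma_1)\circ A(\gamma_0)$, while the linear part behaves like $A(\gamma_0)\circ A(\gamma_1)$. This is not a bookkeeping issue, and none of your remedies fixes it:
\begin{itemize}
\item Changing the group law on $\pi_{x_0}$, or passing to the opposite group, changes the required order for both components at once, so the mismatch persists.
\item With $\Phi_1=\Pi_1$, the translation formula contains $\Phi_1(\gamma_1)^{-1}$ where a homomorphism needs $\Phi_1(\gamma_1)$.
\end{itemize}
In fact no representation can satisfy that formula. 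The two bracketings of $\gamma_2\cdot\gamma_1\cdot\gamma_0$ give values of $\tau_1$ that differ by $\bigl(\Pi_1(\gamma_1)^{-1}\Pi_1(\gamma_2)^{-1}-\Pi_1(\gamma_2)^{-1}\Pi_1(\gamma_1)^{-1}\bigr)\tau_1(\gamma_0)$. This is nonzero in general, which contradicts homotopy invariance of the development together with associativity of concatenation up to homotopy. For non-closed paths the formula does not even typecheck: $\tau_1(\gamma_0)\in T_{\gamma_0(0)}M$, while $\Pi_1(\gamma_1)^{-1}$ acts on $T_{\gamma_1(1)}M$.

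The missing step is to redo the splitting of the integral yourself: the second formula of Proposition~\ref{prop:affine_dev} has $\gamma_0$ and $\gamma_1$ interchanged. On the second half of $\gamma_1\cdot\gamma_0$, the transport from $x_0$ is $\Pi_t(\gamma_1)\Pi_1(\gamma_0)$, so
\[
\tau_1(\gamma_1\cdot\gamma_0)=\tau_1(\gamma_0)+\Pi_1(\gamma_0)^{-1}\tau_1(\gamma_1).
\]
With your choice $\Phi_1=\Pi_1^{-1}$ and your composition law, both components now give $A(\gamma_1\cdot\gamma_0)=A(\gamma_0)\circ A(\gamma_1)$. Hence $A$ is a homomorphism for the standard product on $\pi_{x_0}$, in which $[\gamma_0][\gamma_1]$ means ``traverse $\gamma_0$ first''. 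For the opposite convention, use $\gamma\mapsto A(\gamma)^{-1}=(\Pi_1(\gamma),-\Pi_1(\gamma)\tau_1(\gamma))$.

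The same identity shows that the developing map $[\sigma]\mapsto\tau_1(\sigma)$ on $\tilde M$ is equivariant: $\tau_1(\sigma\cdot\gamma)=A(\gamma)\bigl(\tau_1(\sigma)\bigr)$. Your checks on the identity and inverses then go through as planned.
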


\begin{definition}
\label{def:hyperbolic_koszul}
A locally flat gauge structure $\left( M, \nabla \right)$ is said to be 
hyperbolic in the sense of Koszul \cite{koszul1968} if the universal covering $\tilde{M}$ provided 
with the induced connection $\nabla$ is isomorphic to an open convex subset of $\mathbb{R}^n$ containing non line.
\end{definition}
If $\left( M, \nabla \right)$ is hyperbolic, then the domain $E^\nabla_{x_0} \subset T_{x_0}$ of the 
exponential $\exp^\nabla$ is a convex containing no line and 
\begin{equation*}
 \begin{tikzcd}
E^\nabla_{x_0} \subset T_{x_0} \ar[r,"\exp_{x_0}^\nabla"] & M
\end{tikzcd}   
\end{equation*}
is a universal covering of $M.$ It is invariant by the affine action of proposition \ref{prop:affine_action},
thus, if $E^\nabla_{x_0}$ is a cone, the action is in fact linear. 
If $M$ is compact, then the converse proposition holds, that is if the action is linear, then $E^\nabla_{x_0}$ 
is a cone.
When the above properties hold, then there exists an affine vector field $X$ and a Riemannian metric $g$ such that 
$\mathcal{L}_X g = 0,$ where $\mathcal{L}$ is the Lie derivative. When $M$ is hyperbolic and compact, 
$X$ is unique.

\begin{theorem}[Koszul~\cite{koszul1968}]
\label{thm:Hyper}
Let $(M,\nabla)$ be a locally flat manifold.  
If $(M,\nabla)$ is hyperbolic, then there exists a de Rham closed 
differential $1$-form $\beta$ on $M$ such that its covariant derivative 
$\nabla \beta$ is positive definite.  
If $M$ is compact, this condition is also sufficient.
\end{theorem}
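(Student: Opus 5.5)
We prove the two directions of Theorem~\ref{thm:Hyper} separately: construct $\beta$ from the convex developing image for necessity, and reverse the construction for sufficiency when $M$ is compact.

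\emph{Necessity.} Assume $(M,\nabla)$ is hyperbolic. Identify the universal covering $\tilde M$, with its lifted flat connection, with an open convex domain $\Omega\subset\mathbb{R}^n$ containing no line. By Proposition~\ref{prop:affine_action}, $\pi_1(M)$ acts on $\Omega$ by affine transformations, and $M=\Omega/\pi_1(M)$.

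\begin{enumerate}
\item If $\Omega$ is a cone, use the Koszul--Vinberg characteristic function
\[
\varphi(x)=\int_{\Omega^*} e^{-\langle x,\xi\rangle}\,d\xi ,
\]
where the dual cone $\Omega^*$ has non-empty interior because $\Omega$ contains no line. Under an affine automorphism $a$ of $\Omega$ (linear in the cone case), $\varphi\circ a=|\det a|^{-1}\varphi$. Hence $d\log\varphi$ is $\pi_1(M)$-invariant and descends to a closed $1$-form $\beta$ on $M$. Moreover $\nabla\beta=\nabla d\log\varphi$ is the Hessian of $\log\varphi$, which is positive definite by Cauchy--Schwarz applied to the exponential integral.
\item For a general line-free convex domain, first embed $\Omega$ as the slice $\{t=1\}$ of the cone $C(\Omega)=\{(tx,t):t>0,\ x\in\Omega\}\subset\mathbb{R}^{n+1}$. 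This cone contains no line, and the affine action of $\pi_1(M)$ extends to a linear action on it. Take $\log\varphi_{C}$ and restrict it to the slice $t=1$. Each element acts on $\varphi_C$ by a constant multiplicative factor, so the differential of the restriction is again invariant and closed. Its Hessian is the restriction of a positive definite form, hence positive definite.
\end{enumerate}

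\emph{Sufficiency for compact $M$.} Assume $\beta$ is closed with $\nabla\beta>0$.

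\begin{enumerate}
\item Lift $\beta$ to $\tilde M$, where $\beta=df$ for some $f$. Then $\nabla^2 f$ is positive definite, so $f$ is strictly convex along $\nabla$-geodesics, and $g=\nabla\beta$ is a Hessian metric.
\item Show that the developing map $\mathrm{dev}:\tilde M\to\mathbb{R}^n$ is injective with convex image. The standard route is Koszul's:
\begin{itemize}
\item Compactness makes $g$ complete and gives uniform bounds on $\beta$ relative to $g$, since $\beta$ is bounded on $M$.
\item Consider the Legendre-type map $x\mapsto df_x=\beta_x$ into the dual space. This gradient map of a strictly convex function is injective and a local diffeomorphism.
\item Combined with the bounded norm of $\beta$, this shows that affine segments developed from any point can be continued as long as they stay in the image, which yields convexity of $\mathrm{dev}(\tilde M)$ and injectivity of $\mathrm{dev}$.
\end{itemize}
\item Show the convex image contains no line. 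Along a developed affine line, $f$ would be strictly convex on all of $\mathbb{R}$. But $df=\tilde\beta$ has bounded $g$-norm and $g=\nabla df$ along the line. An ODE comparison then forces the second derivative $f''=g(\dot\gamma,\dot\gamma)$ to be integrable in a way incompatible with $f'$ being bounded and strictly increasing over an infinite $g$-length. This gives the contradiction.
\end{enumerate}

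The main obstacle is the completeness/convexity step for compact $M$: proving that $\nabla$-geodesics developed from a point fill out a convex region injectively. The first attempt will follow Koszul's argument. Using compactness, show that $\exp^\nabla$ is defined on a convex open set $E_{x_0}$, by controlling $d(\beta(\dot\gamma))/dt=\nabla\beta(\dot\gamma,\dot\gamma)>0$ along geodesics. Then prove that $\exp^\nabla:E_{x_0}\to M$ is a covering.
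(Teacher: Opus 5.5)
The paper gives no proof of this theorem; it quotes it from Koszul \cite{koszul1968}, so your proposal has to stand on its own. Your necessity argument is the standard one and is correct: $\overline{C(\Omega)}$ is pointed because $\Omega$ contains no line; each holonomy element $x\mapsto Ax+b$ extends to the linear map $(x,t)\mapsto(Ax+tb,t)$, which preserves $C(\Omega)$; and this map only rescales the characteristic function by $|\det A|^{-1}$, so $d\log\varphi_C(\cdot,1)$ descends to $M$.

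\textbf{The gap: sufficiency, step 2.} You never prove that $\mathrm{dev}\colon\tilde M\to\mathbb{R}^n$ is injective with convex image, and this is the whole content of the converse.
\begin{itemize}
\item The claim that $x\mapsto\beta_x$ is injective ``because it is the gradient of a strictly convex function'' is circular. The usual proof integrates $\nabla\beta$ along the affine segment joining two points, which presupposes that the segment lies in $\tilde M$, i.e.\ the convexity you want. Without that the reason fails: $f=|\mathrm{dev}|^2$ on the universal cover of $\mathbb{R}^2\setminus\{0\}$ has $\nabla df>0$ but a non-injective gradient map.
\item The sentence about continuing segments ``as long as they stay in the image'' is asserted without derivation.
\item The ODE in your last paragraph is information about one geodesic at a time. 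With $K=\max_M|\beta|_g$, along each affine geodesic you get $\beta(\dot\gamma)^2\le K^2\,\tfrac{d}{dt}\beta(\dot\gamma)$. This bounds the existence time of each geodesic separately (for instance $E_{x_0}\subset\{\beta_{x_0}<K^2\}$). It says nothing about how different geodesics fit together, so it cannot give convexity of $E_{x_0}$ or that $\exp^\nabla_{x_0}$ is a covering.
\end{itemize}
You need a genuinely global input. One option is Shima's theorem \cite{shima1981hessian} that the universal affine cover of a compact Hessian manifold is a convex domain, applied to $g=\nabla\beta$. Another is a proper function on $\tilde M$ that is convex along affine geodesics, combined with a continuity argument over affine triangles in $E_{x_0}$.

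\textbf{Step 3.} Once convexity is available, your step 3 does finish the proof, but as worded it is wrong: $f'$ is not bounded along a line. What holds is the following. For $h=f\circ\gamma$ on an affine line $\gamma\colon\mathbb{R}\to\tilde M$, you have $h''=g(\dot\gamma,\dot\gamma)>0$ and $(h')^2=\beta(\dot\gamma)^2\le K^2h''$. Since $h'$ is strictly increasing, it is non-zero at some $t_0$. If $h'(t_0)>0$, then $(1/h')'\le -1/K^2$ for $t\ge t_0$, so $\gamma$ cannot be defined up to $t_0+K^2/h'(t_0)$. If $h'(t_0)<0$, the symmetric argument works backwards. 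Either way this contradicts $\gamma$ being defined on all of $\mathbb{R}$. Equivalently, for $0<\lambda<K^{-2}$ the function $-e^{-\lambda f}$ satisfies $\nabla d(-e^{-\lambda f})\ge\lambda(1-\lambda K^2)e^{-\lambda f}g>0$. It is therefore strictly convex along affine geodesics and negative, and a convex function on $\mathbb{R}$ that is bounded above is constant.
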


Hyperbolic manifolds $(M,\nabla)$ are intimately connected to Hessian geometry. 
Given a closed $1$-form $\beta$, its covariant derivative $\nabla\beta$ yields a symmetric $2$-tensor which, 
by Koszul's characterization of hyperbolicity, is positive definite and thus endows $M$ with a Riemannian metric. 
The local exactness of closed forms allows one to write $\beta = d\varphi$ for some smooth function $\varphi$, 
under which $\nabla\beta$ reduces to the Hessian $d^2\varphi$.

\subsection{Koszul-Vinberg cohomology}

Let $\left( M, \nabla \right)$ be a locally flat gauge structure. It defines a product on the space $\chi(M)$ of smooth sections of $TM$ by:
\begin{equation}
\label{eq:product}
    X \cdot Y = \nabla_X Y, \, X,Y \in \chi(M).
\end{equation}
The commutator $X\cdot Y - Y \cdot X$ is the usual Lie bracket and the associator is given by the relation:
\begin{equation}
    \label{eq:associator_nabla}
    \left( X, Y, Z \right) = \nabla_{\nabla_X Y} Z - \nabla_X \nabla_Y Z = - \nabla^2_{X,Y}Z, \, X, Y, Z \in \chi\left(M\right).
\end{equation}
for an arbitrary connection with curvature $R^\nabla,$ the next proposition holds:
\begin{proposition}
\label{prop:kv_defect}
For any $X,Y,Z \in \chi\left(M\right):$
\begin{equation*}
\label{eq:kv_defect}
    \left( X, Y, Z \right) - \left( Y, X, Z \right) = - R^\nabla\left( X, Y \right) Z,
\end{equation*}
\end{proposition}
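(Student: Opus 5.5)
The plan is a direct expansion of both associators using the definition \eqref{eq:associator_nabla}, followed by collecting terms into the curvature and torsion of $\nabla$. By definition,
\begin{equation*}
(X,Y,Z) - (Y,X,Z) = \nabla_{\nabla_X Y} Z - \nabla_X\nabla_Y Z - \nabla_{\nabla_Y X} Z + \nabla_Y \nabla_X Z .
\end{equation*}
Using $\mathbb{R}$-linearity of $\nabla_U Z$ in the lower slot $U$, the first and third terms combine into $\nabla_{\nabla_X Y - \nabla_Y X} Z$. The remaining two terms are $-\left(\nabla_X\nabla_Y Z - \nabla_Y\nabla_X Z\right)$.

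Next I would use the standard definition $R^\nabla(X,Y)Z = \nabla_X\nabla_Y Z - \nabla_Y\nabla_X Z - \nabla_{[X,Y]}Z$. This rewrites the remaining terms as $-R^\nabla(X,Y)Z - \nabla_{[X,Y]}Z$. Adding the combined first term gives
\begin{equation*}
(X,Y,Z) - (Y,X,Z) = -R^\nabla(X,Y)Z + \nabla_{\nabla_X Y - \nabla_Y X - [X,Y]} Z = -R^\nabla(X,Y)Z + \nabla_{T^\nabla(X,Y)} Z .
\end{equation*}

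The only point needing care is the torsion term. The statement is made for the product \eqref{eq:product}, which the paper introduces under the standing assumption that $\nabla$ is torsion-free, since it asserts that the commutator of the product is the Lie bracket. Under that assumption $T^\nabla = 0$, and the identity reduces exactly to $-R^\nabla(X,Y)Z$. I would remark that for a connection with torsion one gets the extra term $\nabla_{T^\nabla(X,Y)}Z$, so "arbitrary connection" should be read as arbitrary torsion-free connection. In particular, in the locally flat case the associator is symmetric in its first two arguments, which is what makes the product left-symmetric (Koszul--Vinberg).

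There is no real obstacle beyond the sign convention for $R^\nabla$. The convention must be the one above for the minus sign in the statement to hold, and I would fix it explicitly at the start of the proof.
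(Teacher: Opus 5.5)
Your expansion is correct, and it is the routine computation the paper leaves implicit: the paper states the proposition without proof, and the only ingredients are $(X,Y,Z)=-\nabla^2_{X,Y}Z$ together with the Ricci identity $\nabla^2_{X,Y}Z-\nabla^2_{Y,X}Z=R^\nabla(X,Y)Z-\nabla_{T^\nabla(X,Y)}Z$. Your torsion caveat is also right and corrects the paper's wording ``for an arbitrary connection'': in general $(X,Y,Z)-(Y,X,Z)=-R^\nabla(X,Y)Z+\nabla_{T^\nabla(X,Y)}Z$, and the extra term is not tensorial in $Z$, so it does not vanish in general, which means torsion-freeness and the convention $R^\nabla(X,Y)=[\nabla_X,\nabla_Y]-\nabla_{[X,Y]}$ are genuinely needed.
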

When $R^\nabla$ vanishes, the product \ref{eq:product} provides $\chi(M)$ with a Koszul-Vinberg, (or pre-Lie) algebra structure. To any locally flat gauge structure, there is thus an associated Koszul-Vinberg (KV in short) algebra. The next proposition is very classical 
\begin{proposition}
\label{prop:subjacent_lie}
Let $(A,\cdot)$ be a KV algebra. The bracket:
\begin{equation*}
\left [ X, Y \right ] = X \cdot Y - Y \cdot X
\end{equation*}
define a Lie algebra structure on $A,$ called the subjacent Lie algebra.
\end{proposition}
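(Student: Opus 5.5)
The statement to prove is Proposition~\ref{prop:subjacent_lie}: the commutator of a KV algebra is a Lie bracket. Bilinearity and antisymmetry of $[X,Y]=X\cdot Y-Y\cdot X$ are immediate from its definition. The only real content is the Jacobi identity, and the plan is to rewrite the Jacobi sum in terms of associators $(X,Y,Z)$ and then use the defining KV (left-symmetry) property.

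We use the paper's sign convention $(X,Y,Z)=(X\cdot Y)\cdot Z - X\cdot(Y\cdot Z)$. The KV condition, read off from Proposition~\ref{prop:kv_defect} with $R^\nabla=0$, is
\[
(X,Y,Z)=(Y,X,Z).
\]
First expand $[[X,Y],Z]$ into the four products
\[
(X\cdot Y)\cdot Z-(Y\cdot X)\cdot Z-Z\cdot(X\cdot Y)+Z\cdot(Y\cdot X).
\]
Summing over cyclic permutations of $(X,Y,Z)$ gives twelve terms. These group into six associator-type combinations $\pm\big[(U\cdot V)\cdot W-U\cdot(V\cdot W)\big]$, one for each permutation $(U,V,W)$ of $(X,Y,Z)$.

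The bookkeeping should produce
\[
\mathrm{Jac}(X,Y,Z)=\big[(X,Y,Z)-(Y,X,Z)\big]+\big[(Y,Z,X)-(Z,Y,X)\big]+\big[(Z,X,Y)-(X,Z,Y)\big].
\]
To check this, note that $(X\cdot Y)\cdot Z$ from the first cyclic term pairs with $-X\cdot(Y\cdot Z)$, which comes from the term $-X\cdot(Y\cdot Z)$ in the expansion of $[[Y,Z],X]$. The remaining eleven terms pair up in the same way.

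By left-symmetry each bracket vanishes, so $\mathrm{Jac}=0$. The only obstacle is keeping the signs straight in this regrouping.

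Finally, in the geometric case $A=\chi(M)$ with $X\cdot Y=\nabla_XY$, torsion-freeness of $\nabla$ gives $X\cdot Y-Y\cdot X=[X,Y]$. So the subjacent Lie algebra is the usual Lie algebra of vector fields, which agrees with the remark preceding the proposition.
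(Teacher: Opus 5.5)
Your proof is correct and follows the paper's argument: both rewrite the Jacobi sum as the three left-symmetry defects $(U,V,W)-(V,U,W)$, and each defect vanishes in a KV algebra. Your signs are right for the convention $(X,Y,Z)=(X\cdot Y)\cdot Z-X\cdot(Y\cdot Z)$; the paper's displayed identity, with the cyclic sum of $[X,[Y,Z]]$ on the left, is off by an overall sign, which is harmless since both sides vanish.
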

\begin{proof}
The Jacobi identity comes directly from the next expression:
\begin{equation*}
    \begin{split}
        & [X,[Y,Z]] + [Z,[X,Y]] + [Y,[Z,X]] = \\
        & \left( X, Y , Z \right) - \left( X, Z , Y \right) + \left( Y, Z , X \right) 
         +\left( Z, X , Y \right) - \left( Z, Y , X \right) - \left( Y, X , Z \right)
    \end{split}
\end{equation*}
\end{proof}
\begin{remark}
When the connection $\nabla$ is torsion-free, the subjacent Lie algebra 
is $\chi(M)$, the Lie algebra of vector fields.
\end{remark}
\begin{definition}[\cite{bai2008}]
Let $A$ be a KV algebra. A KV-module $V$ is a vector space equipped with respective left and right actions:
\[
X \triangleleft v, v \triangleright X, \, X \in A, v \in V
\]
such that $\triangleleft$ is a Lie algebra representation from the subjacent Lie algebra of $A$ to  $\mathfrak{gl}(V)$ and $\triangleright$  satisfies:
\[
\left( X \triangleleft v \right) \triangleright Y - X \triangleleft \left( 
v \triangleright Y \right) = \left( v \triangleright X \right) \triangleright Y - v \triangleright \left( X \cdot Y \right), \, X,Y \in A, \, v \in V.
\]
\end{definition}
\begin{definition}
    \label{def:kv_complex}
    Let $A$ be a KV algebra and $V$ a $A$-module.For any positive integer $q$, $C^q\left( A, V \right) = \hom_{\mathbb{R}} \left( \otimes^q A, V \right)$ has a natural $KV$-module structure with actions:
    \begin{equation}
        \label{eq:kv_cq}
        \begin{split}
            & \left( a \triangleleft f \right)\left( a_1 \otimes \dots \otimes a_q \right) = a \triangleleft f\left( a_1 \otimes \dots \otimes a_q \right) - \sum_{i=1}^q f\left( 
                a_1 \otimes \dots \otimes \left(a \circ a_i\right) \otimes \dots a_q
             \right) \\
             & \left( f \triangleright a \right)\left( a_1 \otimes \dots \otimes a_q \right) = f\left( a_1 \otimes \dots \otimes a_q \right) \triangleright a.
        \end{split}
    \end{equation}
\end{definition}
\begin{definition}
\label{def:kv_face_insert}
Let $q$ be a positive integer. The face (resp. insertion) operator is defined by:
\begin{equation}
    \label{eq:kv_face}
    \partial_i \colon a_1 \otimes \dots \otimes a_q = a_1 \otimes \dots \otimes \hat{a}_i \otimes \dots \otimes a_q
\end{equation}
resp.
\begin{equation}
    \label{eq:kv_insert}
    \mathcal{I}_i(a) \colon a_1 \otimes \dots \otimes a_q = 
    \begin{cases}
        a_1 \otimes \dots  a_{i-1} \otimes a \otimes a_i \otimes \dots \otimes a_q, \, a \in A, \, i=2 \dots q\\
        a \otimes a_1 \otimes \dots \otimes a_q, \, a \in A, i=1 \\
        a_1 \otimes \dots \otimes a_q \otimes a, \, a \in A, i=q+1 \\
    \end{cases}
\end{equation}
\end{definition}
\begin{definition}
    \label{def:kv_coboundary}
    The coboundary operator $\delta^q \colon C^q \left( A,V \right) \to C^{q+1}\left( A,V \right)$ is defined by \cite{Boyom2002}:
    \begin{equation}
        \label{eq:coboundary}
        \delta^q f = \sum_{j=1 \dots q} \left( -1 \right)^j \left[ a_j  \triangleleft \left(f \partial_j\right) +  f\left( \mathcal{I}_q(a_j)\partial_{q}\partial_j\right)\triangleright a_{q+1}  \right]
    \end{equation}
\end{definition}
The fact that $\delta^2=0$ can be established by brute force \cite{Boyom2002}, but is a consequence of semi-simplicial identities between $\triangleleft$ and $\triangleright$.

In the present work, the previous construction will be applied to $C^\infty(M),$ and $\left(\chi(M),\nabla\right)$ with left action:
\[
X \triangleleft f = X(f)
\]
and trivial right action. 
\begin{definition}
Let $(M,\nabla)$ be a locally flat gauge structure. The \emph{Koszul--Vinberg complex} is defined as
\[
\bigl(C(\nabla), \delta \bigr) = \left( \, C(\nabla) = \bigoplus_{j \geq 0} C^j(\nabla),
\quad \delta_j : C^j(\nabla) \to C^{j+1}(\nabla) \,\right),
\]
where
\[
C^0(\nabla) = \{ f \in C^\infty(M) \mid \nabla^2 f = 0 \}, 
\qquad
C^j(\nabla) = \mathrm{Hom}_{\mathbb{R}}\!\left( \otimes^j \mathcal{X}(M), \, C^\infty(M)\right), 
\quad j \geq 1.
\]

The coboundary operator $\delta_j$ is defined as in \ref{def:kv_coboundary}, except for the $0$-th term:
\begin{equation}
\delta_0 f = df, \quad \forall f \in C^0(\nabla),
\end{equation}
and for $j \ge 1$,

The \emph{Total Koszul--Vinberg cohomology} is then the graded vector space
\begin{equation}
H^j_{\tau}(\nabla) = \ker \delta_j \, / \, \mathrm{im}\,\delta_{j-1}.
\end{equation}
\end{definition}

\begin{proposition}
\label{prop:kv_hessian}
Let $(M, g, \nabla)$ be a Hessian manifold. Then;
\begin{itemize}
    \item The Hessian metric $g$ is a cocycle of the scalar KV complex, and $[g] \in H^{2}_{\tau}(\nabla)$. 
    \item If $[g]=0,$ the Hessian manifold is of Koszul type, that is $g = \nabla \beta$ for $\beta$ a closed $1$-form.
\end{itemize}
\end{proposition}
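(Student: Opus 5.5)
For the first item, I would unwind the definition of the scalar KV complex with the left action $X \triangleleft f = X(f)$ and trivial right action, and compute $\delta_2 g$ explicitly. By Definition~\ref{def:kv_coboundary} and the induced module structure on $C^q$, the coboundary of a bilinear form $f\in C^2(\nabla)$ reduces to
\[
(\delta_2 f)(X,Y,Z) = -(\nabla_X f)(Y,Z) + (\nabla_Y f)(X,Z),
\]
possibly up to an overall sign depending on the conventions. The right-action terms drop out because the right action is trivial. The left-action terms combine as $X(f(Y,Z)) - f(\nabla_X Y, Z) - f(Y,\nabla_X Z)$, which is exactly the covariant derivative $(\nabla_X f)(Y,Z)$. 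The terms involving $f(\nabla_X Y - \nabla_Y X - [X,Y], Z)$ vanish because $\nabla$ is torsion-free. The main bookkeeping obstacle here is matching the face and insertion indices in \eqref{eq:coboundary} with this formula. I would check it in affine coordinates, where $(\delta_2 f)_{ijk} = \pm(\partial_j f_{ik} - \partial_i f_{jk})$.

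With this formula, the cocycle condition $\delta_2 g = 0$ is precisely the Codazzi equation, condition~(2) of Shima's proposition, which holds because $g$ is Hessian. Hence $g \in \ker\delta_2$ and it defines a class $[g]\in H^2_\tau(\nabla)$.

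For the second item, $[g]=0$ means $g = \delta_1\beta$ for some $\beta\in C^1(\nabla)$. First I would check that only the tensorial part of $\beta$ matters. The same unwinding gives $(\delta_1\beta)(X,Y) = -X(\beta(Y)) + \beta(\nabla_X Y) = -(\nabla_X\beta)(Y)$, up to sign. Since $g$ is $C^\infty(M)$-bilinear, the identity $g=\pm\nabla\beta$ forces $\beta$ to be $C^\infty(M)$-linear, so $\beta$ is a $1$-form. This is the delicate point, and I would argue it directly: $\beta(fY)-f\beta(Y)$ equals $\pm Y(f)\cdot(\text{something})$ and must be tensorial, so it vanishes after comparing $X\mapsto X(f)$ terms. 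Replacing $\beta$ by $\mp\beta$, we get $g = \nabla\beta$.

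It remains to see that $\beta$ is closed. Since $g$ is symmetric, $(\nabla_X\beta)(Y) = (\nabla_Y\beta)(X)$. For torsion-free $\nabla$, this says $d\beta(X,Y) = (\nabla_X\beta)(Y)-(\nabla_Y\beta)(X) = 0$. Therefore $\beta$ is a closed $1$-form with $\nabla\beta = g$ positive definite. This is the Koszul-type condition, equivalent by Theorem~\ref{thm:Hyper} to hyperbolicity when $M$ is compact.
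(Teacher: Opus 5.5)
The paper does not prove this proposition. It is quoted as background, and the second item is later credited to \cite[Corollary~2]{boyom2024}. Your direct verification is the natural route, and the first item is correct. With trivial right action, $\delta_2 f(X,Y,Z)=-(\nabla_Xf)(Y,Z)+(\nabla_Yf)(X,Z)$, so $\delta_2 g=0$ is exactly the Codazzi condition. No torsion term actually arises, so that remark is unnecessary. The closedness and positivity argument at the end is also fine, once $\beta$ is known to be a genuine $1$-form.

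The gap is exactly at the step you flag as delicate. Here $C^1(\nabla)=\mathrm{Hom}_{\mathbb{R}}(\mathcal{X}(M),C^\infty(M))$, so $\beta$ is an arbitrary $\mathbb{R}$-linear map and may be non-local. For example, on closed $M$ take $Y\mapsto \bigl(\int_M\theta(Y)\,\mu_g\bigr)\cdot 1$. Nothing forces $\Delta(f,Y):=\beta(fY)-f\beta(Y)$ to have the form $Y(f)\cdot(\text{something})$; in that example $\Delta(f,Y)\neq 0$ can occur with $Y(f)\equiv 0$. Testing tensoriality of $g=\delta_1\beta$ in the second slot, as you propose, only gives
\[
X\bigl(\Delta(f,Y)\bigr)=\Delta\bigl(X(f),Y\bigr)+\Delta(f,\nabla_XY),
\]
and $\Delta=0$ does not follow from this.

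What works is the first slot. One has $\delta_1\beta(hX,Y)-h\,\delta_1\beta(X,Y)=\Delta(h,\nabla_XY)$, so $C^\infty(M)$-bilinearity of $g$ gives $\Delta(h,W)=0$ for every $W$ in the $\mathbb{R}$-span of $\{\nabla_XY\}$. You then need an extra lemma: this span is all of $\mathcal{X}(M)$. In an affine chart, $\nabla_{w\partial_k}(\rho\,x^k\partial_j)=w\,\partial_j$ whenever $w$ and the cutoff $\rho$ are compactly supported in the chart and $\rho\equiv 1$ near $\operatorname{supp} w$. A partition of unity then writes any $W$ as a finite sum of such terms. This is finite for compact $M$; in general, first group the charts into $\dim M+1$ families of pairwise disjoint sets. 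Only then is $\beta$ $C^\infty(M)$-linear, hence a $1$-form, and the rest of your argument applies.
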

Proposition \ref{prop:kv_hessian} is of pure cohomological nature, hence allows the use of tools like spectral sequences \cite{boyom2024}. This fact motivates the corollary \ref{cor:noo-nonneg-ricci} stated in the next section. 
 
\section{Main results}\label{sec:main}
Our main result can be stated as follows.
\begin{theorem}\label{thm:main}
Let $(M^d, g, \nabla)$ be a closed, oriented Hessian manifold of dimension $d$, with
nowhere vanishing first Koszul form $\omega$. The distribution $\mathcal{F}_\omega = \ker\omega$ defines a totally geodesic 
codimension-one foliation on $M^d$. Suppose there exists a leaf $L$ of $\mathcal{F}_\omega$ such that
 $\operatorname{Ric}^g|_{TL\times TL} \ge 0$. Then the Hessian metric $g$ is flat and
\[
1 \le b_1(M^d) \le d \quad (\text{resp. } 2 \le b_1(M^d) \le d,\; b_2(M^d) \ge 1)
\]
whenever the leaves of $\mathcal{F}_\omega$ are compact (resp. dense in $M^d$). 
Moreover, $b_1(M^d) = d$ if and only if $M^d$ is isometric to the flat torus $\mathbb{T}^d$. If additionally $\operatorname{Ric}^g|_{TL\times TL} > 0$ and $d \in \{3,4\}$, then $b_1(M^d) \in \{1, d\}$.
\end{theorem}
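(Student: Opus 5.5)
We will use the Shima–Yagi identity $D\omega=0$: the metric dual $\xi=\omega^\sharp$ is a nonzero Levi-Civita parallel vector field of constant length on $M^d$. Consequently $\ker\omega=\xi^\perp$ is $D$-parallel. This distribution is integrable because $d\omega=0$, and it is totally geodesic, since for $X,Y\in\ker\omega$ we have $g(D_XY,\xi)=-g(Y,D_X\xi)=0$. By the de Rham decomposition, $(M,g)$ is locally a Riemannian product $L\times\mathbb{R}$, and the flow of $\xi$ acts by isometries carrying leaves to leaves. In particular $\operatorname{Ric}^g(\xi,\cdot)=0$, and for $X,Y$ tangent to $L$ the ambient Ricci tensor $\operatorname{Ric}^g(X,Y)$ equals the intrinsic Ricci tensor of the leaf. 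Since all leaves are locally isometric via the flow of $\xi$, the hypothesis $\operatorname{Ric}^g|_{TL\times TL}\ge 0$ on one leaf should propagate to all of $M$: the set of points where the leafwise Ricci is $\ge0$ is closed and invariant under the flow, and the flow is transitive on the leaf space in the compact-leaf case. Altogether this gives $\operatorname{Ric}^g\ge0$ on $M$.

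To obtain flatness we will compute with the Hessian formula $R^g_{jk}=s^r_{sj}s^s_{rk}-\omega_s s^s_{jk}$ and trace it. Writing $\operatorname{tr}$ with respect to $g$, we get $\mathrm{Scal}=|s|^2-|\omega|^2$, where $|s|^2=g^{jk}s^r_{sj}s^s_{rk}$ is a nonnegative quantity because each $s_X$ is self-adjoint. This identity alone is not enough, so we plan to use a divergence identity instead: in affine coordinates $\partial_i\omega_j$ is the Hessian of $\tfrac12\log\det g$, so $\nabla\omega$ is symmetric, and $D\omega=0$ gives $\nabla\omega=s_{\cdot}\omega$, that is $\partial_i\omega_j=s^k_{ij}\omega_k$. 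Integrating the trace of the second Koszul form $\beta=\nabla\omega$ against $\mu_g$, and using the divergence of $\omega$ together with $\operatorname{Ric}\ge0$, should force $\operatorname{Ric}^g\equiv0$ leafwise. Because $M$ is compact with nonnegative Ricci curvature and a Ricci-flat structure, the Bochner technique (Cheeger–Gromoll splitting on the universal cover) reduces the geometry to $\mathbb{R}^k\times N$ with $N$ compact and simply connected. Since a compact Hessian manifold has universal affine cover a convex domain (Shima) and infinite fundamental group, we want to rule out a nontrivial factor $N$ and conclude that $g$ is flat. This step, deducing flatness from $\operatorname{Ric}\ge0$ on one leaf, is the main obstacle. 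The first thing we will try is to show $\mathrm{Scal}\le0$ pointwise, via $|s|^2\le|\omega|^2$ combined with the parallelism of $\xi$, so that $\operatorname{Ric}\ge0$ gives $\operatorname{Ric}=0$. Then the Bochner formula makes the Ricci-flat compact quotient with convex affine universal cover flat, since by Cheeger–Gromoll the factor $N$ must be a point: a compact simply connected factor would contradict the convexity and contractibility of $\tilde M$.

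Once $g$ is flat, Bieberbach gives $b_1(M)\le d$, with equality exactly for the flat torus, because a Bieberbach group with $b_1=d$ has trivial holonomy. The closed nonexact form $\omega$ (nonexact since it is nowhere vanishing on a closed manifold) gives $b_1\ge1$. If the leaves are dense, the period group of $\omega$ has rank $\ge2$, hence $b_1\ge2$. Moreover, if $\alpha$ is a second independent harmonic, and hence parallel, $1$-form, then $\omega\wedge\alpha$ is a parallel, hence harmonic, nonzero $2$-form, so $b_2\ge1$. For the last claim, note that strict positivity $\operatorname{Ric}^g|_{TL}>0$ is incompatible with flatness unless the statement is read leafwise before the flatness conclusion. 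We will therefore treat it via the classification of flat manifolds of dimension $3$ and $4$, combined with the foliation structure: compact leaves give a mapping torus of a flat $(d-1)$-manifold, and we will check from Hantzsche–Wendt type holonomies that $b_1\in\{1,d\}$.
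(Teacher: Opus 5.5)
The step you single out as the main obstacle is not one, and the detour you propose for it should be dropped. Your first paragraph already gives $\operatorname{Ric}^g\ge 0$ on all of $M$. Your propagation argument (closedness plus density of $L$ in the dense case; the isometric, foliation-preserving flow of $\xi$ acting transitively on the leaf space $\mathbb{S}^1$ in the compact case) in fact supplies a step the paper only asserts. Cheeger--Gromoll needs nothing more. For a closed manifold with $\operatorname{Ric}\ge 0$, the universal cover splits isometrically as $\mathbb{R}^k\times N$ with $N$ compact. Since $\tilde M$ is diffeomorphic to a convex domain (Shima), it is contractible, so $N$ is a closed contractible manifold, hence a point, and $\tilde g$ is Euclidean. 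This is exactly the paper's argument, and you state it yourself at the end of your second paragraph; it does not depend on first proving $\operatorname{Ric}\equiv 0$. The route you suggest towards Ricci-flatness is also unsupported. Nothing gives the pointwise inequality $|s|^2\le|\omega|^2$ a priori. The integral identity carries no information: $D\omega=0$ gives $\beta(X,Y)=\omega(s_XY)$, so $\operatorname{tr}_g\beta=|\omega|^2$ is constant, and $\operatorname{div}_g\omega^\sharp=0$. Your Betti-number arguments are correct and more elementary than the paper's route through Blumenthal, Reeb--Godbillon, Chern's criterion and Bochner. These are: non-exactness of a nowhere-vanishing closed form; rank of the period group for dense leaves; $\omega\wedge\alpha$ for a second parallel $1$-form; and trivial Bieberbach holonomy when $b_1=d$.

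The genuine gap is the last assertion. Your plan to get $b_1\in\{1,d\}$ from flatness plus the foliation structure via the classification of flat $3$- and $4$-manifolds cannot work for $d=4$. Take $M=N\times\mathbb{S}^1$, where $N$ is the orientable flat half-turn $3$-manifold ($b_1(N)=1$) carrying $\nabla=D$. Let $\mathbb{S}^1=\mathbb{R}_{>0}/\langle x\mapsto 2x\rangle$ carry its radiant structure and $g=dx^2/x^2$. This is a closed oriented Hessian $4$-manifold with local potential $\tfrac12|y|^2-\log x$. Its Koszul form $\omega=-dx/x$ is nowhere vanishing and its leaves $N\times\{x\}$ are compact and flat. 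So every hypothesis except strict positivity holds, yet $b_1(M)=2$. The correct conclusion is the one you almost drew: once $g$ is flat, $\operatorname{Ric}^g\equiv 0$, which contradicts $\operatorname{Ric}^g|_{TL\times TL}>0$ because $\dim L=d-1\ge 2$. The final claim is therefore vacuous and needs no classification. The paper instead argues directly from positivity on the leaf, without using flatness. For compact leaves it uses Myers' theorem, $b_1(L)=0$ and $b_1(M)\le b_1(L)+1$; for dense leaves it uses the topology of open manifolds of positive Ricci curvature. Its hypothesis is likewise never realised.
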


\begin{proof}
Let $(M^d, g, \nabla)$ be a closed, oriented Hessian manifold of dimension $d$, with first Koszul form $\omega$. 
Define the vector field $V$ by $\omega = g(V, \cdot)$.
 By \cite[Theorem~4.1]{shima1997}, $\omega$ satisfies $D\omega = 0$, hence $d\omega = 0$ and $\|\omega\|_g = k$ 
 for some non-zero constant $k$. The distribution $\ker\omega$ is a codimension-one subbundle of $TM$.
  From $D\omega = 0$, a direct computation gives $X(\omega(Y)) = \omega(D_X Y)$ for all $X, Y \in \Gamma(TM)$.
   In particular, if $Y \in \Gamma(\ker\omega)$, then $D_X Y \in \Gamma(\ker\omega)$ for every $X \in \Gamma(TM)$,
  and integrability of $\mathcal{F}_\omega = \ker\omega$ follows from \cite[Lemma~1.5.1]{furness1972}.
   Moreover, $D_X Y \in \Gamma(\ker\omega)$ for all $X \in \Gamma(TM)$ implies
\[
  g(D_{Y'} Y,\, Z) = 0
  \qquad
  \forall\, Y, Y' \in \Gamma(\ker\omega),\quad \forall\, Z \in \Gamma((\ker\omega)^\perp).
\]
By \cite[Theorem~5.23]{tondeur2012}, this condition is equivalent to $\mathcal{F}_\omega$ being a totally
 geodesic codimension-one foliation on $M^d$. 
Consequently, by \cite{blumenthal1983},
\[
M^d \simeq \frac{\widetilde{L}\times \mathbb{R}}{\pi_1(M^d)},
\]
where $\widetilde{L}$ is the universal cover of the leaves of $\mathcal{F}_\omega$.
 By \cite[p.~111]{reeb1952}, the leaves of $\mathcal{F}_\omega$ are all homeomorphic, 
 and are either all compact or all dense in $M^d$. When the leaves are compact, \cite{godbillon1991,reeb1952}
  gives that $M^d$ is the total space of a fibration $p \colon M^d \to \mathbb{S}^1$ with fibre $L$, 
  and $\mathcal{F}_\omega$ is the fibre foliation $\{p^{-1}(\theta) \mid \theta \in \mathbb{S}^1\}$.
   Hence $\pi_1(M^d) \cong \pi_1(L) \rtimes \mathbb{Z}$, so $H_1(M^d;\mathbb{Z})$ has rank at least $1$ and $b_1(M^d) \ge 1$. 
   When the leaves are dense, $M^d$ compact implies $\pi_1(M^d)$ is finitely generated. By \cite[pp.~45--46]{godbillon1991}, 
   the quotient
\[
  \frac{\pi_1(M^d)}{\pi_1(L)} \simeq K
\]
is finitely generated, torsion-free, and abelian. Since the leaves are dense, \cite[p.~46]{godbillon1991}
 implies $K$ is acyclic, hence $K \simeq \mathbb{Z}^m$ for some $m \ge 2$. Thus we have a short exact sequence
\[
  1 \longrightarrow \pi_1(L) \longrightarrow \pi_1(M^d)
  \longrightarrow \mathbb{Z}^m \longrightarrow 1, \qquad m \ge 2.
\]
Abelianisation is a right exact functor, so this induces
\[
  H_1(L;\mathbb{Z}) \xrightarrow{i_*} H_1(M^d;\mathbb{Z})
  \xrightarrow{p_*} \mathbb{Z}^m \longrightarrow 0,
\]
and therefore $m \le \operatorname{rank}H_1(M;\mathbb{Z}) = b_1(M^d)$, giving $b_1(M^d) \ge m \ge 2$. 

Since $D\omega = 0$ implies $DV = 0$, Chern's criterion \cite{chern1966} 
gives $b_2(M^d) \ge b_1(M^d) - 1$, hence $b_2(M^d) \ge 1$.

Using the orthogonal decomposition $TM^d = \mathbb{R}V \oplus \ker\omega$, 
any vector field $U \in \Gamma(TM^d)$ can be written as $U = fV + X$ with $f \in C^\infty(M^d)$ and 
$X \in \Gamma(\ker\omega)$. A direct computation gives
\[
  \operatorname{Ric}(U,U)
  = f^2\operatorname{Ric}(V,V) + 2f\operatorname{Ric}(X,V)
    + \operatorname{Ric}(X,X).
\]
Since $DV = 0$, the vector field $V$ is parallel, and the two cross terms vanish, leaving
\[
  \operatorname{Ric}(U,U) = \operatorname{Ric}(X,X).
\]
By assumption, $\operatorname{Ric}^g|_{TL \times TL} \ge 0$, so $\operatorname{Ric}(X,X) \ge 0$ 
for all $X \in \Gamma(\ker\omega)$, and consequently $\operatorname{Ric}^g \ge 0$ on all of $TM^d$. 
Bochner's theorem \cite{Bochner1946,yano1953} then yields $b_1(M^d) \le d$, and we deduce that $2 - \gamma \le b_1(M^d) \le d, \gamma = 1 \text{ if leaves are compact, and } \gamma = 0 \text{ if leaves are dense.}$
Moreover, when $b_1(M^d) = d$, Bochner's theorem forces $M^d$ to be isometric to the flat torus $\mathbb{T}^d$.

Since the universal cover of a compact Hessian manifold is a convex domain \cite{shima1981hessian},
 it is contractible, so $\pi_k(M^d) = 0$ for all $k \ge 2$ and $M$ is a $K(\pi_1(M^d),1)$-space. 
 The Cheeger--Gromoll splitting theorem \cite[Corollary~7.3.13]{petersen2006} then implies that $g$ is flat. 

Now assume additionally that $\operatorname{Ric}^g|_{TL \times TL} > 0$. In the compact leaves case, 
by \cite[p.~46]{godbillon1991}, $m = 1$, and the short exact sequence
\[
  1 \longrightarrow \ker(p_*)
  \longrightarrow H_1(M^d;\mathbb{Z})
  \longrightarrow \mathbb{Z}
  \longrightarrow 1
\]
gives $b_1(M^d) = \operatorname{rank}(\ker p_*) + 1$. Since
\[
  \operatorname{rank}(\ker p_*)
  = \operatorname{rank}\!\left(H_1(L;\mathbb{Z}) \big/ \ker(i_*)\right)
  \le \operatorname{rank}(H_1(L;\mathbb{Z})) = b_1(L),
\]
we obtain $b_1(M^d) \le b_1(L) + 1$. By Myers' theorem \cite{myers1941}, positive Ricci curvature
 implies $\pi_1(L)$ is finite, so $b_1(L) = 0$, and therefore $b_1(M^d) = 1$.

In the dense leaves case, assume $d = 4$. Since $M^d$ is compact, \cite[Exercise~10.4.28]{conlon2001}
 implies the leaves are complete. The leaves are three-dimensional complete non-compact (open) manifolds with positive Ricci curvature, 
 hence diffeomorphic to $\mathbb{R}^3$ \cite{shi1989} (Theorem 2.5), and thus these leaves are simply connected. 
 Therefore $\pi_1(M^d) \simeq \mathbb{Z}^m$ for some $m \ge 2$, so $M^d$ is a $K(\mathbb{Z}^m,1)$-space, homotopy-equivalent
  to $\mathbb{T}^m$ \cite[p.~49]{godbillon1991}. Since $M^d$ is orientable, $H_d(M^d;\mathbb{Z}) \simeq \mathbb{Z}$, 
  while $H_d(\mathbb{T}^m;\mathbb{Z}) \simeq \mathbb{Z}^{\binom{m}{d}}$, so $\binom{m}{d} = 1$, which gives $m = d$.
   Hence $M^d$ is homeomorphic to $\mathbb{T}^4$ and $b_1(M^d) = 4$. In the case $d = 3$, by the same argument,
    the leaves are open surfaces. The leaves are thus diffeomorphic to either $\mathbb{R}^2$, the M\"obius strip, 
    or $\mathbb{S}^1 \times \mathbb{R}$ \cite[p.~49]{godbillon1991}. Since $M^d$ is orientable, $\mathcal{F}_\omega$ is transversally 
    orientable, hence its leaves are orientable; the M\"obius strip is therefore excluded. The leaves are thus diffeomorphic to either
     $\mathbb{R}^2$ or $\mathbb{S}^1 \times \mathbb{R}$. Positive Ricci curvature on these surfaces implies $b_1(L) = 0$ by 
     \cite{anderson1990}, so the leaves are diffeomorphic to $\mathbb{R}^2$ and are simply connected. The same argument as in the $4$-dimensional case yields that $M^d$ is homeomorphic to $\mathbb{T}^3$ and $b_1(M^d) = 3$.

\medskip
In all cases, $b_1(M) \in \{1, d\}$, completing the proof.
\end{proof}

We can now state the following corollaries

\begin{corollary}\label{cor:no-nonneg-ricci}
Let $(M^d, g, \nabla)$ be a closed, oriented Hessian manifold. 
Suppose that either $(M^d, g, \nabla)$ is of Koszul type, or 
$(M^d, \nabla)$ is a radiant affine manifold. Then no leaf $L$ 
of the foliation $\mathcal{F}_\omega=\ker\,\omega,$
where $\omega$ denotes the first Koszul form of $(M^d, g, \nabla)$, 
carries non-negative Ricci curvature with respect to the induced 
metric $g|_L$.
\end{corollary}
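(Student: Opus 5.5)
Argue by contradiction using Theorem~\ref{thm:main}. Suppose some leaf $L$ of $\mathcal{F}_\omega$ satisfies $\operatorname{Ric}^{g}|_{TL\times TL}\ge 0$. For the foliation to make sense, $\omega$ must be nowhere vanishing. By \cite[Theorem~4.1]{shima1997}, $D\omega=0$, so $\|\omega\|_g$ is constant; if $\omega\equiv 0$ then $\ker\omega=TM$ and the only leaf is $M$ itself, a case we treat separately. In the main case, Theorem~\ref{thm:main} gives that $g$ is flat. The flatness argument in its proof actually shows more: $\operatorname{Ric}^g\ge 0$ on all of $M$ with a parallel vector field $V$, and the Cheeger--Gromoll splitting then forces the Levi-Civita curvature to vanish. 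The plan is to show that flatness of $g$ is incompatible with each hypothesis.

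\emph{Koszul type.} Here $g=\nabla\beta$ with $\beta$ closed, so by Theorem~\ref{thm:Hyper} $(M,\nabla)$ is hyperbolic. Its universal cover is therefore a convex domain $\Omega\subset\mathbb{R}^d$ containing no line, and the deck group acts properly and cocompactly by affine maps. On the other hand, flatness of $g$ makes $(\widetilde M,\tilde g)$ isometric to Euclidean $\mathbb{R}^d$ by completeness (compactness) and simple connectivity. The Hessian structure is then $\tilde g=\nabla^2\varphi$ with $\varphi$ strictly convex on $\Omega$. The key step is to show that $D=\nabla$ in this setting. I would argue as follows. Since $V$ is $D$-parallel and $\omega=\operatorname{tr}s$, flatness of $g$ combined with the Ricci formula $R_{jk}=s^r_{sj}s^s_{rk}-s^r_{rs}s^s_{jk}=0$ gives
\[
\operatorname{tr}(s_X s_X)=\omega(s_X X).
\]
Integrating over $M$, or using the Shima--Yagi identities, should yield $s=0$, hence $\omega=0$, contradicting the assumption that $\omega$ is nowhere vanishing. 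Alternatively, if $D=\nabla$ fails, one can use the fact that a compact hyperbolic affine manifold has nonzero Koszul form and infinite volume growth of the characteristic function, which contradicts the Euclidean geometry of $\tilde g$. If $\omega\equiv0$, then $D=\nabla$ by \cite[Theorem~4.2]{shima1997}, so $\nabla$ is complete and $\widetilde M=\mathbb{R}^d$ affinely. This is not a convex set without lines, contradicting hyperbolicity. Thus every case contradicts the Koszul-type hypothesis.

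\emph{Radiant case.} A radiant manifold carries a vector field $E$ with $\nabla E=\mathrm{Id}$. Since $g$ is flat and $M$ is a $K(\pi,1)$ by convexity of the universal cover \cite{shima1981hessian}, Bieberbach's theorem makes $\pi_1(M)$ virtually $\mathbb{Z}^d$. The claim to prove is that $\nabla$ coincides with $D$, or at least has linear holonomy in $O(g)$ with a complete affine structure. I would use $\nabla E=\mathrm{Id}$ to compute
\[
\operatorname{div}_\nabla E=d,
\]
and compare this with the Riemannian divergence: $\operatorname{div}_g E=d+\omega(E)$, with sign conventions to be checked. Integrating over closed $M$ gives
\[
\int_M \omega(E)\,\mu_g=-d\,\mathrm{vol}(M).
\]
Meanwhile the existence of a parallel $V$ and the vanishing of the $D$-curvature should force $\omega(E)$ to be expressible as a divergence, giving a contradiction. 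The case $\omega\equiv0$ gives $D=\nabla$; then $E$ satisfies $DE=\mathrm{Id}$, so $\operatorname{div}_g E=d$, whose integral over $M$ must vanish, a contradiction.

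\emph{Main obstacle.} The hard step is the nonvanishing-$\omega$ case. There I must exclude the possibility that $g$ is flat while $\nabla\neq D$, which means extracting $s=0$ from flatness plus the Codazzi condition on a compact manifold. I would first try the integral Bochner-type identity from \cite{shima1997} relating $\|s\|^2$, $\|\omega\|^2$ and $\operatorname{div}$ of $\omega$-type vector fields.
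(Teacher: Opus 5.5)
There is a genuine gap, and it is the step you flag as the main obstacle. It cannot be closed: flatness of $g$ does not force $s=D-\nabla=0$, even for Koszul-type or radiant structures, and the corollary itself is false for every $d\ge 2$.

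Here is a counterexample. Take the positive orthant $O=\{x\in\mathbb{R}^d : x_i>0\}$ with its standard flat connection $\nabla$, the potential $\phi=-\sum_i\log x_i$, and $g=\nabla d\phi=\sum_i x_i^{-2}\,dx_i^2$. Let $\Gamma\cong\mathbb{Z}^d$ act by $x_i\mapsto 2^{n_i}x_i$.
\begin{itemize}
\item In the coordinates $u_i=\log x_i$ one has $g=\sum_i du_i^2$, and $\Gamma$ acts by translations of $(\log 2)\mathbb{Z}^d$. So $M=O/\Gamma$ is a closed oriented flat torus.
\item The closed form $\beta=d\phi$ is $\Gamma$-invariant with $g=\nabla\beta$. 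So the structure is of Koszul type, and its affine universal cover is the line-free cone $O$.
\item The Euler field $E=\sum_i x_i\partial_{x_i}$ is $\Gamma$-invariant with $\nabla E=\mathrm{Id}$, so $(M,\nabla)$ is radiant.
\item The Koszul form $\omega=\frac{1}{2}\,d\log\det g=-\sum_i du_i$ is nowhere zero.
\item The leaves $\{\sum_i u_i=c\}$ are flat, totally geodesic subtori, so $\operatorname{Ric}^g|_{TL\times TL}=0\ge 0$.
\end{itemize}

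This example defeats each of your proposed tools.
\begin{itemize}
\item In $x$-coordinates $s^i_{ii}=-1/x_i$ and all other components vanish. So $s\neq 0$, yet your identity $\operatorname{tr}(s_Xs_X)=\omega(s_XX)$ holds pointwise. No integration or Shima--Yagi identity can extract $s=0$ from it.
\item In the radiant case $\omega(E)\equiv -d$. This is consistent with your (correct) formula $\int_M\omega(E)\,\mu_g=-d\,\mathrm{vol}(M)$. So $\omega(E)$ is not a divergence, and no contradiction arises.
\item Your treatment of the subcase $\omega\equiv 0$ is correct, but that is not where the difficulty lies.
\end{itemize}

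The paper's proof breaks at the same point, only less visibly. It argues that flatness of $g$ makes the universal cover $\mathbb{R}^d$, contradicting hyperbolicity, whose universal cover is a proper convex domain. These statements concern two different structures on the same manifold. Flatness of $g$ identifies $\widetilde M$ isometrically with Euclidean space via the Levi-Civita connection $D$. Hyperbolicity is a statement about the developing map of $\nabla$, and a convex domain is diffeomorphic to $\mathbb{R}^d$. A contradiction would require $\nabla$ itself to be complete, essentially $D=\nabla$. That is exactly what you recognized as needing proof, and the example shows it is false. The paper's radiant case reduces to the Koszul case via $\nabla^*=2D-\nabla$ and inherits the same defect; the example above is of both types at once.
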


\begin{proof}
\text{Case 1: $(M^d, g, \nabla)$ is of Koszul type.}

By definition, there exists a closed $1$-form $\omega$ on $M$ such that
$g = \nabla\omega$ \cite{shima2007}. By \cite{koszul1968, shima2007},
this implies that $(M, \nabla)$ is affine hyperbolic, and $M$ is
diffeomorphic to an orbit space
\[
M \;\cong\; \Gamma \backslash O,
\]
where $O \subset \mathbb{R}^d$ is an open convex domain containing no
complete affine line, and $\Gamma$ is a discrete subgroup of the affine
group $\mathrm{Aff}(d)$ acting properly and freely on $O$
(see Koszul~\cite{koszul1968} and Vey~\cite{vey1969}).

By \cite[Theorem~8.4]{shima2007}, there exists an affine coordinate
system $\{y^1, \ldots, y^d\}$ such that $y^i > 0$ on $O$ for all $i$,
and the tube domain $TO = \mathbb{R}^d + \sqrt{-1}\,O$ admits a Bergman
kernel $K = K(y^1, \ldots, y^d)$. The volume element on $O$ is given by
\[
\nu_O \;=\; \sqrt{K}\; dy^1 \wedge \cdots \wedge dy^d,
\]
and the first Koszul form on $O$ takes the explicit form
\[
\alpha \;=\; \frac{1}{2} \sum_{i} \frac{\partial \log K}{\partial y^i}\, dy^i.
\]
Denoting by $p : O \to M$ the covering projection, one has $p^*\omega = \alpha$
and $p^*\nu_g = \nu_O$. Moreover, a direct computation using the flatness
of the affine connection $\nabla^0$ (defined by $\nabla^0 \partial_i = 0$
and satisfying $dp(\nabla^0) = \nabla$) yields:
\[
\nabla^0 \nu_O \;=\; \alpha \otimes \nu_O.
\]
Since $\alpha$ is $\Gamma$-invariant, this relation descends to $M$ and gives
\[
\nabla \nu_g \;=\; \omega \otimes \nu_g,
\]
confirming that $\omega$ is indeed the first Koszul form of the Hessian
structure $(g, \nabla)$. In particular, since the domain $O$ is a proper
convex domain and the Bergman kernel $K$ is non-constant on $O$, the
form $\omega$ is nowhere vanishing on $M$.

Suppose for contradiction that some leaf $L$ of
$\mathcal{F}_\omega = \ker\,\omega$ satisfies
\[
\mathrm{Ric}^g\big|_{TL \times TL} \;\geq\; 0.
\]
Then by Theorem~\ref{thm:main}, the Hessian metric $g = \nabla\omega$ must
be flat, so that
\[
M^d \;\simeq\; \mathbb{R}^d / \Lambda,
\]
where $\Lambda \subset \mathbb{R}^d$ is a Bieberbach group acting freely
and properly discontinuously on $\mathbb{R}^d$, and the universal cover
of $M^d$ is $\mathbb{R}^d$. However, this contradicts the hyperbolicity
of $(M, \nabla)$, whose universal cover must be a proper convex domain
strictly contained in $\mathbb{R}^d$. Hence no leaf of $\mathcal{F}_\omega$
admits non-negative Ricci curvature with respect to $g|_L$.

\medskip

\text{Case 2: $(M^d, \nabla)$ is a radiant affine manifold.}

By \cite[Theorem~5.8]{Gnandi2026}, there exists a closed $1$-form $\Omega$
on $M$ such that $g = \nabla^*\Omega$, where $\nabla^*$ denotes the dual
connection of $\nabla$, defined by
\[
\nabla^* \;=\; 2D - \nabla,
\]
with $D$ the Levi-Civita connection of $g$. It is well known that
$(g, \nabla^*)$ is itself a Hessian structure on $M$, so that
$(M^d, g, \nabla^*)$ is a Hessian manifold of Koszul type. The conclusion
then follows immediately from Case~1 applied to $(M^d, g, \nabla^*)$.
\end{proof}

\begin{corollary}\label{cor:noo-nonneg-ricci}
Let $(M^d, g, \nabla)$ be a closed, oriented Hessian manifold. 
If the cohomology class $[g] \in H^{2}_{\tau}(\nabla)$ 
is trivial, then no leaf $L$ of the foliation
$\mathcal{F}_\omega \;=\; \ker\,\omega,$
where $\omega$ denotes the first Koszul form of $(M^d, g, \nabla)$,
carries non-negative Ricci curvature with respect to the induced 
metric $g|_L$.
\end{corollary}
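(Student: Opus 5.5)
The plan is to reduce the statement to Case~1 of Corollary~\ref{cor:no-nonneg-ricci}, using Proposition~\ref{prop:kv_hessian} as the bridge. By the first item of Proposition~\ref{prop:kv_hessian}, the Hessian metric $g$ is a $2$-cocycle of the scalar KV complex, so $[g]\in H^2_\tau(\nabla)$ is well defined. By the second item, triviality of this class means that $g=\nabla\beta$ for some closed $1$-form $\beta$. In other words, $(M^d,g,\nabla)$ is of Koszul type.

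Concretely, I would write $[g]=0$ as $g=\delta_1\beta$ for some $\beta\in C^1(\nabla)$. I would then check, by unwinding the coboundary formula (\ref{eq:coboundary}) with left action $X\triangleleft f=X(f)$ and trivial right action, that
\[
(\delta_1\beta)(X,Y)=X(\beta(Y))-\beta(\nabla_XY)=(\nabla_X\beta)(Y),
\]
up to the sign convention. Symmetry of $g$ then forces $(\nabla_X\beta)(Y)=(\nabla_Y\beta)(X)$. Since $\nabla$ is torsion-free, this is exactly $d\beta=0$. We obtain $g=\nabla\beta$ with $\beta$ closed, and $\nabla\beta$ is positive definite because $g$ is Riemannian. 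By Theorem~\ref{thm:Hyper}, $(M^d,\nabla)$ is then hyperbolic in the sense of Koszul, since $M$ is compact.

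From here the argument follows Case~1 of Corollary~\ref{cor:no-nonneg-ricci}. The universal cover of $M$ is affinely a proper convex domain containing no line, and the first Koszul form $\omega$ is nowhere vanishing, so $\mathcal{F}_\omega$ is a genuine codimension-one foliation and Theorem~\ref{thm:main} applies. Suppose some leaf $L$ satisfies $\operatorname{Ric}^g|_{TL\times TL}\ge 0$. Then Theorem~\ref{thm:main} forces $g$ to be flat. Flatness is incompatible with hyperbolicity. One route is to say that $M$ is Bieberbach and so its universal cover is $\mathbb{R}^d$, which is what the paper uses. A more robust route: if $g$ is flat, Shima–Yagi gives $D\omega=0$, and the Ricci formula in affine coordinates combined with flatness should force $s=0$, hence $\omega=\operatorname{tr}s=0$. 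This contradicts $\omega$ being nowhere vanishing. Either way no such leaf exists.

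The main obstacle is the identification of $[g]=0$ with the Koszul-type condition. This requires the sign and index conventions of $\delta$ in (\ref{eq:coboundary}) to actually yield $\nabla\beta$, and it requires that $\beta$ be a genuine smooth $1$-form rather than an arbitrary $\mathbb{R}$-multilinear cochain. I would take this identification directly from the second item of Proposition~\ref{prop:kv_hessian}, which the paper states, rather than re-derive it. A secondary subtlety is the non-vanishing of $\omega$. I would justify it as in Case~1, or note that otherwise $\omega\equiv0$ gives $D=\nabla$ and a flat, hence non-hyperbolic, structure, contradicting Theorem~\ref{thm:Hyper}.
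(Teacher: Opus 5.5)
\textbf{The gap is the final step, and it cannot be repaired because the statement is false.} Your reduction matches the paper's. From $[g]=0$ you get $g=\nabla\beta$ with $\beta$ closed; the paper simply cites Boyom for this, and your unwinding of $\delta_1$ is fine. Compactness and Theorem~\ref{thm:Hyper} then give Koszul hyperbolicity, and Theorem~\ref{thm:main} makes $g$ flat. The step that fails is ``flatness is incompatible with hyperbolicity'', and neither of your two routes establishes it.

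\emph{Route 1.} This is exactly the argument of Case~1 of Corollary~\ref{cor:no-nonneg-ricci}, on which the paper's proof rests. It confuses the Riemannian universal cover with the affine developing image. A proper convex domain is itself diffeomorphic to $\mathbb{R}^d$. Since $\omega\neq0$ forces $\nabla\neq D$, the isometry of $(\widetilde M,\widetilde g)$ with Euclidean space need not be $\nabla$-affine. So nothing prevents the developing image of $\nabla$ from being a cone.

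\emph{Route 2.} This is false. Tracing $R^g_{jk}=s^r_{sj}s^s_{rk}-s^r_{rs}s^s_{jk}$ and using the total symmetry of $s_{ijk}$ gives $\operatorname{Scal}^g=\|s\|^2-\|\omega\|^2$. Flatness therefore only yields $\|s\|^2=\|\omega\|^2$, not $s=0$.

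Your exclusion of $\omega\equiv0$ is sound, precisely because there $\nabla=D$ is complete, so the Riemannian and affine developing maps coincide.

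\textbf{Counterexample.} For $d\ge2$, set up the following.
\begin{itemize}
\item Let $O=(\mathbb{R}_{>0})^d$ with its standard flat connection.
\item Let $\Gamma\cong\mathbb{Z}^d$ be generated by the dilations $x_i\mapsto 2x_i$ of single coordinates, and put $M=O/\Gamma\cong\mathbb{T}^d$.
\item Let $g=\sum_i dx_i^2/x_i^2=\nabla\beta$ with $\beta=-\sum_i dx_i/x_i$, a closed $\Gamma$-invariant form.
\end{itemize}
This is a closed oriented Hessian manifold of Koszul type. By your own computation $g=\pm\delta_1\beta$, so $[g]=0$. It is hyperbolic, even radiant, since the orthant contains no line.

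In the coordinates $t_i=\log x_i$ one has $g=\sum_i dt_i^2$. So $(M,g)$ is the flat torus $\mathbb{R}^d/(\log 2)\mathbb{Z}^d$, although $s^i_{ii}=-1/x_i\neq0$. The first Koszul form $\omega=\tfrac12\, d\log\det g=-\sum_i dt_i$ is parallel and nowhere zero. The leaves of $\ker\omega$ are the totally geodesic flat tori $\{\sum_i t_i=c\}$, on which $\operatorname{Ric}^g\equiv0$.

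So every leaf carries non-negative Ricci curvature. No argument along these lines, yours or the paper's, can prove the corollary as stated.
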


\begin{proof}
By \cite[Corollary~2]{boyom2024}, the vanishing of the 
Koszul--Vinberg cohomology class 
$[g] \in H^{2}_{\tau}(\nabla)$ implies that 
$(M^d, g, \nabla)$ is a Hessian manifold of Koszul type. 
The conclusion then follows immediately from 
Corollary~\ref{cor:no-nonneg-ricci}.
\end{proof}

The following proposition gives a complete classification of closed, 
oriented Hessian $3$-manifolds satisfying the Ricci condition 
of Theorem~\ref{thm:main}.

\begin{proposition}\label{pro:dim3}
Under the same hypotheses as in Theorem~\ref{thm:main} with $d = 3$,
if $\operatorname{Ric}^g|_{TL \times TL} \geq 0$, then 
$b_1(M^3) \in \{1, 3\}$ and $M^3$ is diffeomorphic to either the 
flat torus $\mathbb{T}^3$, or a torus bundle 
$\mathbb{T}^2 \hookrightarrow M^3 \to \mathbb{S}^1$ whose monodromy 
$A \in \mathrm{SL}(2,\mathbb{Z})$ is periodic, i.e.\ conjugate in 
$\mathrm{GL}(2,\mathbb{Z})$ to one of
\[
\begin{pmatrix} 0 & 1 \\ -1 & 0 \end{pmatrix}, \qquad
\begin{pmatrix} -1 & 0 \\ 0 & -1 \end{pmatrix}, \qquad
\begin{pmatrix} 0 & -1 \\ 1 & 1 \end{pmatrix}, \qquad
\begin{pmatrix} -1 & -1 \\ 1 & 0 \end{pmatrix}.
\]
\end{proposition}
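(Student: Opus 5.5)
Apply Theorem~\ref{thm:main} with $d=3$. It gives that $g$ is flat, so $(M^3,g)$ is a closed orientable flat Riemannian $3$-manifold. By Bieberbach, $M^3=\mathbb{R}^3/\Gamma$ with $\Gamma$ a torsion-free crystallographic group. The proof then splits according to whether the leaves of $\mathcal{F}_\omega$ are dense or compact, and in the compact case uses the classification of orientable flat $3$-manifolds (Hantzsche--Wendt, Wolf).

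\emph{Dense leaves.} Theorem~\ref{thm:main} gives $b_1(M^3)\ge 2$. Among the six closed orientable flat $3$-manifolds, the torus has $b_1=3$ and the other five have $b_1\in\{0,1\}$. Hence $b_1=3$ and $M^3\cong\mathbb{T}^3$.

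\emph{Compact leaves.} Then $M^3$ fibres over $\mathbb{S}^1$ with fibre $L$. Since $g$ is flat and $\mathcal{F}_\omega$ is totally geodesic, $L$ is a closed flat orientable surface, i.e.\ $L\cong\mathbb{T}^2$. Indeed:
\begin{itemize}
\item $\mathcal{F}_\omega$ is transversally oriented by $V$ and $M$ is oriented, so $L$ is orientable;
\item the Gauss equation on the totally geodesic leaf gives $K_L=0$, and Gauss--Bonnet forces $\chi(L)=0$.
\end{itemize}
So $M^3$ is a $\mathbb{T}^2$-bundle over $\mathbb{S}^1$ with monodromy $A\in\mathrm{SL}(2,\mathbb{Z})$; orientability of $M^3$ gives $\det A=1$. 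The bound $b_1\le 3$ and $b_1=3\iff\mathbb{T}^3$ come from Theorem~\ref{thm:main}.

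The remaining task is to pin down $A$. Since $\pi_1(M^3)\cong\mathbb{Z}^2\rtimes_A\mathbb{Z}$ is a Bieberbach group, its holonomy is finite. I would argue this concretely. The parallel field $V$ is transverse to the leaves, and the time-$t$ flow of $V/k^2$ maps leaves isometrically to leaves because $V$ is Killing and parallel. Its first-return map is then an isometry of the flat torus $L$ inducing $A$ on $H_1(L)$. The linear part of an isometry of a flat torus lies in a finite group, so $A$ has finite order; equivalently $|\operatorname{tr}A|<2$ or $A=\pm I$.

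Finite-order elements of $\mathrm{SL}(2,\mathbb{Z})$ have order $1,2,3,4,6$. Up to $\mathrm{GL}(2,\mathbb{Z})$-conjugacy they are represented by
\begin{itemize}
\item $I$ for order $1$, giving $\mathbb{T}^3$;
\item $-I$ for order $2$;
\item $\begin{pmatrix}0&1\\-1&0\end{pmatrix}$ for order $4$, whose trace $0$ gives a single class;
\item $\begin{pmatrix}-1&-1\\1&0\end{pmatrix}$ for order $3$, with trace $-1$;
\item $\begin{pmatrix}0&-1\\1&1\end{pmatrix}$ for order $6$, with trace $1$.
\end{itemize}
This uses the standard fact that $\mathrm{GL}(2,\mathbb{Z})$-conjugacy merges the $\mathrm{SL}$-classes of $A$ and $A^{-1}$.

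Finally compute $b_1$ as $1+\dim\ker(A-I)$ on $H_1(L;\mathbb{R})$. This gives $b_1=3$ for $A=I$ and $b_1=1$ for the four nontrivial matrices, so $b_1(M^3)\in\{1,3\}$.

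The main obstacle is the rigorous finite-order claim for the monodromy. I would realize it via the parallel transverse flow as above. If that turns out to be delicate, for instance because the return time is not constant, I would instead cite Bieberbach's theorem: the translation subgroup of $\pi_1$ has finite index, and $A$ acts on $\mathbb{Z}^2$ through the finite holonomy group.
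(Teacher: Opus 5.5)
Your proof is correct, but it takes a different route from the paper's. The paper does not split into compact and dense leaves. It reads off $b_1(M^3)\in\{1,2,3\}$ from Theorem~\ref{thm:main} and discards $b_1=2$ by citing Theorem~3.5 of \cite{gnandi2025}. It then combines flatness with the classification of closed orientable flat $3$-manifolds \cite{Wolf1974} to get either $\mathbb{T}^3$ or a $\mathbb{T}^2$-bundle with periodic monodromy; the Hantzsche--Wendt manifold is excluded there only tacitly, via $b_1\ge 1$. Finally it cites Theorem~4.1 of \cite{gnandi2025} for the list of $\mathrm{GL}(2,\mathbb{Z})$-classes.

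You use the classification only in the dense-leaf case, and only to read off Betti numbers. In the compact-leaf case you argue intrinsically:
\begin{enumerate}
\item you build the torus fibration from the Gauss equation and Gauss--Bonnet on the totally geodesic leaf;
\item you get periodicity of the monodromy from the first-return map of the parallel field $V/k^2$, which is an isometry of the flat torus $L$;
\item you enumerate the finite-order classes yourself;
\item you derive $b_1\in\{1,3\}$ from the Wang sequence instead of importing the exclusion of $b_1=2$.
\end{enumerate}
The paper's version is shorter but rests entirely on external classification results. Yours is essentially self-contained, makes the exclusion of Hantzsche--Wendt explicit, and adds the information that dense leaves occur only on $\mathbb{T}^3$.

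The Bieberbach fallback you mention is unnecessary. For compact leaves the period group of $\omega$ is some $\lambda\mathbb{Z}$, and integrating $\omega$ gives the fibration $f\colon M^3\to\mathbb{R}/\lambda\mathbb{Z}$. Since $\omega(V/k^2)=1$, the flow $\psi_t$ of $V/k^2$ satisfies $f\circ\psi_t=f+t$, so the return time is exactly $\lambda$.
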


\begin{proof}
By Theorem~\ref{thm:main}, the condition 
$\operatorname{Ric}^g|_{TL \times TL} \geq 0$ implies that 
$b_1(M^3) \in \{1, 2, 3\}$. By 
\cite[Theorem~3.5]{gnandi2025}, the case $b_1(M^3) = 2$ 
is excluded, so that $b_1(M^3) \in \{1, 3\}$.

Moreover, Theorem~\ref{thm:main} implies that the Hessian metric 
$g$ is flat, so that $(M^3, g)$ is a closed, oriented, flat 
Riemannian $3$-manifold. By Bieberbach's theorem, $M^3$ is finitely 
covered by $\mathbb{T}^3$. The classification of closed oriented 
flat $3$-manifolds (see~\cite{Wolf1974, bieberbach1912}) 
shows that $M^3$ is either diffeomorphic to $\mathbb{T}^3$, or 
fibers over $\mathbb{S}^1$ with fiber $\mathbb{T}^2$ and periodic 
monodromy $A \in \mathrm{SL}(2,\mathbb{Z})$. By 
\cite[Theorem~4.1]{gnandi2025}, the finite-order elements 
of $\mathrm{SL}(2,\mathbb{Z})$ are precisely those conjugate in 
$\mathrm{GL}(2,\mathbb{Z})$ to the four matrices listed above, 
which completes the proof.
\end{proof}

\begin{remark}\label{rem:nonorientable}
Theorem~\ref{thm:main} extends to the non-orientable case. Indeed,
let $(M^d, g, \nabla)$ be a compact, possibly non-orientable Hessian
manifold, and let $\pi \colon \hat{M}^d \to M^d$ be its orientation
double cover. Then $(\hat{M}^d, \pi^*g, \pi^*\nabla)$ is a compact
oriented Hessian manifold to which Theorem~\ref{thm:main} applies,
yielding that $\pi^*g$ is flat on $\hat{M}^d$. Since $\pi$ is a 
local isometry, $g$ is flat on $M^d$ as well. Moreover, using 
\cite[Theorem~(4),~p.~716]{brasher1969}
$$b_s(\hat{M}) = b_s(M) + b_{d-s}(M),$$
applied with $s = 1$, we obtain
$$b_1(M) \;\leq\; b_1(\hat{M}) \;\leq\; d.$$
\end{remark}
We can now state the following corollary.
\begin{corollary}
Let $(M^3, g,\nabla)$ be a compact non-orientable $3$-dimensional Hessian manifold. Suppose that the orientable double cover $(\hat{M}^3,\pi^*g, \pi^*\nabla)$ defined in Remark~\ref{rem:nonorientable} satisfies the assumptions of Theorem~\ref{thm:main}. Then $\hat{M}^3$ is diffeomorphic to one of the manifolds listed in Proposition~\ref{pro:dim3}, and $M^3$ is diffeomorphic to one of the four flat manifolds $\mathfrak{B}_1$, $\mathfrak{B}_2$, $\mathfrak{B}_3$, $\mathfrak{B}_4$ of \cite[pp.~122--123]{Wolf1974}, with $b_{1}(M^3)\in \{1, 2\}$.
\end{corollary}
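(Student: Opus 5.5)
Let $\pi\colon \hat{M}^3\to M^3$ be the orientation double cover from Remark~\ref{rem:nonorientable}, with deck involution $\sigma$, which reverses orientation and acts by isometries of $\pi^*g$ preserving $\pi^*\nabla$. I would apply Theorem~\ref{thm:main} and Proposition~\ref{pro:dim3} to $(\hat{M}^3,\pi^*g,\pi^*\nabla)$, descend to $M^3$, and then read off $b_1$ from Wolf's table.

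\emph{Step 1: the cover.} By hypothesis $\hat{M}^3$ satisfies the assumptions of Theorem~\ref{thm:main} with $d=3$. Proposition~\ref{pro:dim3} then gives directly that $\hat{M}^3$ is diffeomorphic to $\mathbb{T}^3$ or to a $\mathbb{T}^2$-bundle over $\mathbb{S}^1$ with periodic monodromy, i.e.\ to one of the manifolds listed there. Theorem~\ref{thm:main} also gives that $\pi^*g$ is flat.

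\emph{Step 2: descent.} Since $\pi$ is a local isometry, $g$ is flat on $M^3$, exactly as in Remark~\ref{rem:nonorientable}. So $M^3$ is a compact, non-orientable, flat Riemannian $3$-manifold. By the Bieberbach classification as tabulated in \cite[pp.~122--123]{Wolf1974}, there are exactly four such affine diffeomorphism classes, $\mathfrak{B}_1,\dots,\mathfrak{B}_4$. Hence $M^3$ is one of them. As a consistency check, the orientation covers of the $\mathfrak{B}_i$ are $\mathbb{T}^3$ or the torus bundle with monodromy $-I$, which agrees with Step~1.

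\emph{Step 3: Betti numbers.} I would use the first Betti numbers from Wolf's computation of $H_1(\mathfrak{B}_i;\mathbb{Z})$: these are $\mathbb{Z}^2$, $\mathbb{Z}^2$, $\mathbb{Z}\oplus\mathbb{Z}_2\oplus\mathbb{Z}_2$ and $\mathbb{Z}\oplus\mathbb{Z}_4$ (up to the ordering convention), so $b_1\in\{1,2\}$. Alternatively, the formula $b_1(\hat{M})=b_1(M)+b_2(M)$ from Remark~\ref{rem:nonorientable}, combined with $\chi(M)=0$, gives $b_2(M)=b_1(M)-1$ for non-orientable $M$ (since $b_3(M)=0$). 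Hence $b_1(\hat{M})=2b_1(M)-1$, and $b_1(\hat{M})\in\{1,3\}$ yields $b_1(M)\in\{1,2\}$. This second route is self-contained and avoids depending on the table, so I would present it.

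\emph{Main obstacle.} The only delicate point is the justification in Step~2 that non-orientable flat $3$-manifolds are exactly the four $\mathfrak{B}_i$, and that the list matches the covers from Step~1. This rests entirely on citing Wolf's classification. I would state it as a citation rather than rederive the holonomy groups ($\mathbb{Z}_2$ for $\mathfrak{B}_1,\mathfrak{B}_2$ and $\mathbb{Z}_2^2$ for $\mathfrak{B}_3,\mathfrak{B}_4$).
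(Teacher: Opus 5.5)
Your proof is correct. Steps 1 and 2 follow the paper's argument exactly: apply Theorem~\ref{thm:main} and Proposition~\ref{pro:dim3} to $\hat{M}^3$, push flatness down through the local isometry $\pi$, and invoke Wolf's classification of compact non-orientable flat $3$-manifolds.

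The difference is the Betti number claim. The paper reads $b_1$ off Wolf's list of the groups $H_1(\mathfrak{B}_i;\mathbb{Z})$. Your preferred route instead combines the Brasher formula $b_1(\hat{M})=b_1(M)+b_2(M)$ with $\chi(M)=0$ and $b_3(M)=0$ to get $b_1(\hat{M})=2b_1(M)-1$, and then uses $b_1(\hat{M})\in\{1,3\}$ from Proposition~\ref{pro:dim3}.

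Your route buys two things. First, it does not depend on the classification or the homology table at all for the $b_1$ statement. Second, it gives the sharper correspondence that $b_1(M)=2$ exactly when $b_1(\hat{M}^3)=3$, i.e.\ when $\hat{M}^3\cong\mathbb{T}^3$. The paper's route additionally records the torsion in $H_1$, which the statement does not need.

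One small correction to your first route. $\mathfrak{B}_1$ is the product of the Klein bottle with a circle, so $H_1(\mathfrak{B}_1;\mathbb{Z})\cong\mathbb{Z}^2\oplus\mathbb{Z}_2$, not $\mathbb{Z}^2$. Only $\mathfrak{B}_2$ has $H_1\cong\mathbb{Z}^2$. No reordering of labels fixes a list containing two copies of $\mathbb{Z}^2$. This does not affect $b_1$.
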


\begin{proof}
Let $\pi \colon \hat{M}^3 \to M^3$ be the orientable double cover of Remark~\ref{rem:nonorientable}. Since $\pi$ is a local isometry, $(\hat{M}^3, \pi^*g, \pi^*\nabla)$ is a compact orientable Hessian manifold; by assumption it satisfies the hypotheses of Theorem~\ref{thm:main}, so $\hat{M}^3$ is diffeomorphic to one of the manifolds in Proposition~\ref{pro:dim3}. Since $\pi$ is a local isometry, $(M^3, g)$ is flat. The classification of compact flat non-orientable $3$-manifolds \cite{Wolf1974} then forces $M^3$ to be diffeomorphic to one of $\mathfrak{B}_1, \mathfrak{B}_2, \mathfrak{B}_3, \mathfrak{B}_4$. In each case the first homology group $H_1(M^3;\mathbb{Z})$ is isomorphic to one of
\[
    \mathbb{Z}^2 \oplus \mathbb{Z}_2, \quad \mathbb{Z}^2, \quad \mathbb{Z} \oplus \mathbb{Z}_2^2, \quad \mathbb{Z} \oplus \mathbb{Z}_4,
\]
from which we read off $b_1(M^3) \in \{1,2\}$.
\end{proof}

\begin{corollary}
Every closed Hessian $3$-manifold $(M^3, g, \nabla)$ of Koszul type (hyperbolic) is a 
Seifert fibered space with vanishing Euler number whose base orbifold is hyperbolic .
\end{corollary}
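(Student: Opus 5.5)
Let $(M^3,g,\nabla)$ be a closed Hessian $3$-manifold of Koszul type. By Koszul's theorem (Theorem~\ref{thm:Hyper}) and Definition~\ref{def:hyperbolic_koszul}, $(M,\nabla)$ is hyperbolic. Hence its universal cover is an open convex domain $O\subset\mathbb{R}^3$ containing no line, and $M\cong\Gamma\backslash O$ with $\Gamma\subset\mathrm{Aff}(3)$ acting properly and freely.

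\textbf{Step 1: an effective circle action.} The plan is to exhibit a nontrivial isometric circle action on $M$ and then invoke the classification of Seifert fibrations. As recalled after Definition~\ref{def:hyperbolic_koszul}, a compact hyperbolic affine manifold carries a unique affine vector field $X$ together with a Riemannian metric $h$ satisfying $\mathcal{L}_X h=0$. We first argue that $X$ is nonzero. When $O$ is a cone, $X$ is induced by the radial field, i.e.\ the radiant case. In general we work with the Koszul form: by Shima--Yagi, $D\omega=0$, so the metric dual $V$ of $\omega$ is $g$-parallel, in particular Killing. Moreover $V\neq 0$, as shown in the proof of Corollary~\ref{cor:no-nonneg-ricci}, since $O$ is proper and the Bergman kernel is non-constant. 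The closure of the flow of $V$ in the compact group $\mathrm{Isom}(M,g)$ is a torus $T^k$ with $k\ge 1$. Choosing a circle subgroup gives an effective $\mathbb{S}^1$-action. Its orbits are circles, and a locally free circle action on a closed $3$-manifold is a Seifert fibration (Epstein).

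\textbf{Step 2: local freeness and Euler number.} The circle action is locally free: any isotropy at a point would have to fix a parallel vector, while the flow of a nonvanishing parallel field has no fixed points, so all stabilisers are finite. The Seifert structure also comes with a closed nowhere-vanishing $1$-form $\omega$ whose restriction to the fibres is nontrivial, because $\omega(V)=\|V\|^2>0$ and the circle direction stays close to $V$. This is the obstacle to a nonzero Euler number: a closed form that is positive on the fibres is incompatible with $e\neq 0$, since for $e\neq0$ the fibre class is torsion in $H_1(M;\mathbb{Q})$, so every closed form integrates to zero on fibres. Hence $e=0$.

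\textbf{Step 3: hyperbolicity of the base orbifold $B$.} With $e=0$, the Thurston geometry of $M$ is $\mathbb{E}^3$, $\mathbb{S}^2\times\mathbb{R}$ or $\mathbb{H}^2\times\mathbb{R}$, according to whether $\chi(B)=0$, $\chi(B)>0$ or $\chi(B)<0$. The case $\chi(B)>0$ gives finite $\pi_1$ or an $\mathbb{S}^2\times\mathbb{R}$ manifold, whose universal cover is not contractible; both are excluded because $O$ is convex (Shima). The case $\chi(B)=0$ makes $M$ flat, which reduces to the Euclidean case. To exclude it we argue as in Corollary~\ref{cor:no-nonneg-ricci}. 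A flat $3$-manifold has virtually abelian $\pi_1$. One then shows, via Koszul--Vey, that a virtually abelian group acting properly and cocompactly on a properly convex domain forces $O$ to be a product of half-lines, i.e.\ an orthant with a diagonal action. Since a cocompact action of rank $3$ on an orthant in $\mathbb{R}^3$ is impossible (such an action has rank at most $2$ modulo the radial direction), this contradicts compactness. Hence $\chi(B)<0$ and $B$ is hyperbolic.

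The main obstacle is this last exclusion of the Euclidean case. If the direct argument fails, the fallback is to use the $\mathbb{Z}$-grading by $\omega$. This gives a cocompact action on a level hypersurface of the Koszul potential, and we would derive a contradiction with amenability, using Vey's theorem that a group acting cocompactly on an irreducible properly convex domain is non-amenable.
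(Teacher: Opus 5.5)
\paragraph{Gap: the Euclidean case cannot be excluded.} Your argument breaks at the exclusion of the Euclidean case in Step 3. It cannot be repaired there, because the statement itself fails in that case.

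Your assertion that a rank-$3$ group cannot act cocompactly on the orthant $O=\{x_1,x_2,x_3>0\}$ is false. The positive diagonal group $(\mathbb{R}_{>0})^3$ acts simply transitively on $O$: in the coordinates $u_i=\log x_i$ it is the translation group of $\mathbb{R}^3$. Hence $\Gamma=\{\mathrm{diag}(e^{a},e^{b},e^{c}) : (a,b,c)\in\mathbb{Z}^3\}$ acts freely, properly and cocompactly by linear maps, and $\Gamma\backslash O\cong\mathbb{T}^3$. The scalings belong to the acting group; they are not divided out. Your ``rank $2$ modulo the radial direction'' count is the count for the projectivised triangle, not for the cone.

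On this quotient, $\beta=-\sum_i dx_i/x_i$ is closed and $\Gamma$-invariant, and $g=\nabla\beta=\sum_i dx_i^2/x_i^2$ is positive definite and $\Gamma$-invariant. So $(\mathbb{T}^3,g,\nabla)$ is a closed oriented Hessian $3$-manifold of Koszul type. But $\mathbb{T}^3$ has no Seifert fibration with hyperbolic base: the base orbifold group would be a non-abelian quotient of $\pi_1(\mathbb{T}^3)=\mathbb{Z}^3$. So the corollary as stated is false.

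Your fallback cannot rescue it either. The orthant is reducible, so a non-amenability statement for irreducible cones says nothing here. Indeed the level set $\{x_1x_2x_3=1\}$ carries a cocompact, amenable $\mathbb{Z}^2$-action.

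\paragraph{The paper's proof fails at the same point.} It discards $\mathbb{T}^3$ and the torus bundles via Corollary~\ref{cor:no-nonneg-ricci}. The proof of that corollary argues that a flat $g$ makes the universal cover $\mathbb{R}^d$ and so contradicts hyperbolicity, which conflates the Riemannian universal cover with the affine developing image. In the example, $g=\sum_i du_i^2$, so $(O,g)$ is isometric to Euclidean $\mathbb{R}^3$ while the affine universal cover is the orthant. Moreover, the leaves of $\ker\omega$, where $\omega=-\sum_i du_i$, are flat, which contradicts that corollary as well.

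\paragraph{What your Steps 1--2 do prove.} Your route is more intrinsic than the paper's appeal to an external classification, and once tidied it proves something true. Choose the circle generator $W$ in the Lie algebra of the torus closure of the flow of $V$ close enough to $V$ that $\omega(W)>0$ everywhere. This gives:
\begin{enumerate}
\item local freeness of the circle action (your isotropy remark does not establish this);
\item a regular fibre on which $\omega$ integrates positively, so its class in $H_1(M;\mathbb{R})$ is non-zero, which forces an orientable base and $e=0$.
\end{enumerate}
Combined with your exclusion of $\mathbb{S}^2\times\mathbb{R}$ by contractibility of the universal cover, the correct conclusion is: $M$ is Seifert fibred with $e=0$ and base orbifold Euclidean or hyperbolic. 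Both cases occur, namely the example above, and $\Sigma\times\mathbb{S}^1$ obtained from a torsion-free cocompact Fuchsian group times a homothety acting on the Lorentz cone.
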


\begin{proof}
By \cite[Theorem~4.1]{gnandi2025}, every closed oriented Hessian $3$-manifold
is diffeomorphic to one of the following: the Hantzsche--Wendt manifold; the
$3$-torus $\mathbb{T}^3$; a torus bundle $\mathbb{T}^2_A$ with monodromy
\[
A \in \left\{
    \begin{pmatrix} 0 & 1 \\ -1 & 0 \end{pmatrix},\quad
    \begin{pmatrix} -1 & 0 \\ 0 & -1 \end{pmatrix},\quad
    \begin{pmatrix} 0 & -1 \\ 1 & 1 \end{pmatrix},\quad
    \begin{pmatrix} -1 & -1 \\ 1 & 0 \end{pmatrix}
\right\};
\]
or a quotient $(\mathbb{H}^2 \times \mathbb{R})/\Gamma$, where $\Gamma \subset
\mathrm{Isom}(\mathbb{H}^2 \times \mathbb{R})$ is a discrete cocompact subgroup
acting freely. We eliminate the first three families. By
\cite[Corollaire]{koszul1965}, every Hessian manifold of Koszul type
satisfies $b_1(M^3) \geq 1$. Since the Hantzsche--Wendt manifold has first
Betti number $b_1 = 0$, it is excluded. The torus $\mathbb{T}^3$ and the torus
bundles $\mathbb{T}^2_A$ are excluded by Corollary~\ref{cor:no-nonneg-ricci}
together with Proposition~\ref{pro:dim3}. The only remaining case is the
$\mathbb{H}^2 \times \mathbb{R}$ geometry in the sense of Thurston, and every
closed $3$-manifold admitting this geometry is a Seifert fibered space with vanishing Euler number whose base orbifold is hyperbolic, which completes the proof.
\end{proof}

\section*{Acknowledgments}	
  
The authors are grateful to Aziz El Kacimi Alaoui (Universit\'e Polytechnique Hauts-de-France, Laboratoire DMATHS-C\'ERAMATHS) for invaluable discussions on codimension foliation and for his support. 
  
\bibliographystyle{plain}
\bibliography{main}
\end{document}